\theoremstyle{plain}
\newtheorem{theorem}{Theorem}
\newtheorem{corollary}{Corollary}
\theoremstyle{definition}
\DeclareMathOperator{\sign}{sign} 
\DeclareMathOperator{\ctg}{ctg} \DeclareMathOperator{\tg}{tg}
\def\Sh{\mathop{\rm Sh}\nolimits}
\def\R{{\mathbb R}}
\def\Z{{\mathbb Z}}
\def\T{{\mathcal T}}
\def\F{{\mathfrak f}}
\def\myref#1{(\ref{#1})}
\def\text#1{\mbox{\rm #1\,}}
\def\theappend{\roman{append}}
\begin{document}
\frenchspacing

\title[Asymptotic control theory for a closed string]
{Asymptotic control theory for a closed string}

\author{Aleksey Fedorov}
\address{Russian Quantum Center\\
143025 Novaya st. 100, Skolkovo, Moscow, Russia\\
akf@rqc.ru}

\author{Alexander Ovseevich}
\address{Institute for Problems in Mechanics, Russian Academy of Sciences\\
119526, Vernadsky av., 101/1, Moscow, Russia\\
ovseev@ipmnet.ru}

\maketitle

\begin{abstract}
We develop an asymptotical control theory for one of the simplest distributed oscillating systems, namely, for a closed string under a bounded load applied to a single distinguished point. 
We find exact classes of string states that admit complete damping and an asymptotically exact value of the required time. 
By using approximate reachable sets instead of exact ones, we design a dry-friction like feedback control, which turns out to be asymptotically optimal. 
We prove the existence of motion under the control using a rather explicit solution of a nonlinear wave equation. 
Remarkably, the solution is determined via purely algebraic operations. 
The main result is a proof of asymptotic optimality of the control thus constructed.
\medskip\noindent
\textsc{Keywords} maximum principle, reachable sets, linear system

\medskip\noindent
\textsc{MSC 2010:} 93B03, 93B07, 93B52.
\end{abstract}

\section{Introduction}

It is well-known that the limit capabilities of a control system can be described in terms of reachable sets, i.e. sets
of states reachable from a given state at a given time. In minimum-time problems it is convenient to pass to the backward
time. By passing to the backward time, we can consider sets reachable from the terminal manifold. Geometrically, the
minimum time control is organized as follows. Any state corresponds to a reachable set such that its boundary passes
through this state. The minimum-time control at the current state forces our system move in the direction normal to the
boundary of the corresponding reachable set.

Unfortunately, the reachable sets are difficult to study. However, sometime we possess an approximation of the reachable
sets. In particular, for linear systems there is a systematic asymptotic theory of the reachable sets as time goes to
infinity~\cite{Ovseevich2007}. By substituting the approximate reachable set for the exact reachable set we can design a
control. Alongside the simplicity this control might be asymptotically optimal. This type of control can be interpreted
as an action of a generalized dry friction. The behavior of any system of finite number of linear oscillators under the
dry-friction control has been studied in Refs~\cite{Ovseevich2013,Ovseevich2016}.

\subsection{Problem statement}

In this paper we apply this technique to control of a simple distributed system, the closed string under an impulsive
force applied at a fixed point in the string. The phase space $S$ of the system  consists of pairs
$\mathfrak{f}=(f_0,f_1)$ of distributions on a one-dimensional torus $\T$, and the motion is governed by the string
equation
\begin{equation}\label{string_eq}
    \frac{\partial^2 f}{\partial t^2}=\frac{\partial^2 f}{\partial x^2}+u\delta, \quad |u|\leq1.
\end{equation}
Here $x\in[0,2\pi]$ is the angle coordinate on the torus, $t$ is time, $f_0=f,$ $f_1=\frac{\partial f}{\partial t}$,
$\delta$ is the Dirac $\delta$-function. In other words,  $S$ is the space of initial data for Eq.~(\ref{string_eq}). It
is clear, that any solution of Eq.~(\ref{string_eq}) with zero initial data is even. This is why we assume that $S$
consists of pairs $\mathfrak{f}=(f_0,f_1)$ of even distributions. The equation describes oscillations of the closed
homogeneous string under a bounded load applied to a fixed point (zero). We note that a mechanical model of such system
can be not only a string, but also, e.g., a toroidal acoustic resonator.

Our goal is to design an easily implementable feedback control for damping the oscillations. This means that we do not
necessarily want to stop the motion of the string as a whole, so that that our target manifold ${\mathcal C}$ consists of
pairs of {constants}
\begin{equation}\label{eq:terminal}
    {\mathcal C}=\{(c_0,c_1)^*\in\R^2\subset S\}.
\end{equation}
Another useful point of view is to take the factor-space $\overline{S}=S/{\mathcal C}$ as the phase space of our system,
and try to reach zero in this space. This is reasonable, because the target space ${\mathcal C}$ is invariant under the
natural flow, associated with the string equation. In what follows, we deal with a class of problems of minimum-time
steering from the initial state to a terminal manifold ${\mathcal C}$ consisting of a pair of
constants~(\ref{eq:terminal}). More specifically, we study three problems:
\begin{enumerate}
    \item Complete stop at a given point: ${\mathcal C}=0$
    \item Stop moving: ${\mathcal C}=\R\times 0$
    \item Oscillation damping: ${\mathcal C}=\R^2$
\end{enumerate}

The main result of the present paper is the design of the generalized dry-friction control and the proof of its
asymptotic optimality in the stop moving problem. A summary of our results was presented in Ref.~\cite{Ovseevich2017}.

There are interesting related problems of stabilization of solutions to linear differential equations in partial
derivatives. The goal of stabilization is again damping, but it should be reached in  infinite time, and there are no a
performance index. We mention a few interesting and relatively recent papers on the subject
\cite{hassine,abdallah,rousseau}.
Moreover, problems of control for oscillating distributed systems are important for a wide spectrum of technological objects~\cite{Akulenko,Butkovskiy}.

\subsection{Structure}

The paper is organized as follows. In Section~\ref{sec:mechanics} we discuss a mechanical model of the considered system.
In Section~\ref{sec:support} we find an explicit form of the support functions of the reachable sets. We then study the
asymptotic behavior of the reachable sets as time goes to infinity using the language of shapes in
Section~\ref{sec:shape}. In Section~\ref{sec:control} on the basis of this study we design the generalized dry-friction
control for our system. In particular, we invoke the notion of the duality transform for a smooth description of the
generalized dry-friction. In Section~\ref{sec:restatement} we show that in the stop-moving problem the motion of the
system under the dry-friction control can be described via solution of a nonlinear wave equation of second order. We then
restate the nonlinear wave equation as a nonlinear first order equation. In Section~\ref{sec:optimality} we prove the
existence of the motion under the generalized dry-friction by means of a rather explicit solution of the nonlinear
equation. Remarkably, it is made via purely algebraic operations. In Section~\ref{sec:optimality} we demonstrate that the
suggested control is asymptotically optimal, and describe features of this control in Section~\ref{sec:features}.
Finally, we conclude in Section~\ref{sec:conclusion}. Appendices \ref{sec:AppI} and \ref{sec:AppIII} contain a number of
auxiliary results.

\section{String as a mechanical system.}\label{sec:mechanics}
The equations of motion of the free string \myref{string_eq} are that of the following Lagrangian system, where $q$ is an
even function such that  $\frac{\partial q}{\partial x}\in L_2(\T)$, and the Lagrangian
\begin{equation}\label{lagrange}
    L(q,\dot q)=\frac12\int_\T |\dot q|^2(x)\,dx-\frac12\int_\T \left|\frac{\partial q}{\partial x}\right|^2(x)\,dx-u q(0),
\end{equation}
so that $\frac12\int_\T |\dot q|^2\,dx$ is the kinetic energy, and $\frac12\int_\T \left|\frac{\partial f}{\partial
x}\right|^2\,dx$ is the potential energy of the system. The Lagrangian corresponds to the Hooke's law: the strain
(deformation) is proportional to the applied stress. In terms of the Lagrangian the stress at point $x$ is $\frac{\delta
L}{\delta q}(x)=\frac{\partial q}{\partial x}(x)$, and the strain at $x$ is $\frac{\partial q}{\partial x}(x)$, so that
the coefficient of proportionality is 1.

The string also allows for a Hamiltonian description. The phase space is then the set of pairs $(p,q)$, where $p$ is an
(even) function from $L_2(\T)$, $q$ is an (even) function from the space $N$ of functions such that $\frac{\partial
q}{\partial x}\in L_2(\T)$, and the Hamiltonian
\begin{equation}\label{lagrange_ham}
    H(p,q)=\frac12\int_\T |p|^2(x)\,dx+\frac12\int_\T \left|\frac{\partial q}{\partial x}\right|^2(x)\,dx+u q(0).
\end{equation}
The canonical symplectic structure $\omega=dp\wedge dq$ is given by
$\omega\left((X,X'),(Y,Y')\right)=\langle{X,Y'}\rangle-\langle{X',Y}\rangle$. Here $X,Y\in L_2(\T)$, $X',Y'\in N$, and
the angle brackets stand for the scalar product in $L_2(\T)$.

Finally, the Pontryagin Hamiltonian $H^{\rm pont}$ in ``coordinates'' $\mathfrak{f}=(f_0,f_1) $ takes the form
\begin{equation}\label{pont}
    H^{\rm pont}(\mathfrak{f},\mathfrak{x})=\langle f_1,\xi_0\rangle+\langle \Delta f_0,\xi_1\rangle+|\xi_1(0)|,
\end{equation}
with $\mathfrak{x}=(\xi_0,\xi_1)$ being the adjoint variables.

\section{The support function of reachable sets}\label{sec:support}

The first issue we deal with is that of controllability of the considered system. We approach it by computation of the
support function. The approach has much in common with that of Ref.~\cite{Lions1988}.

To make a comparison with the finite-dimensional case clear, we rewrite the governing equation in the form of first-order
system
\begin{equation}\label{string_eq2}
\begin{split}
    &\quad\quad\frac{\partial \mathfrak{f}}{\partial t}=A\mathfrak{f}+Bu, \quad |u|\leq1, \\
    A=&\left(
    \begin{array}{cc}
    0 & 1 \\
    \Delta & 0 \\
    \end{array}
    \right),
    \quad
    \Delta=\frac{\partial^2 }{\partial x^2},
    \quad
    B=\left(
    \begin{array}{c}
    0  \\
    \delta\\
    \end{array}
    \right).
\end{split}
\end{equation}
We commence with computing the support function $H=H_{D(T)}(\xi)$ of the reachable set $D(T),\,T\geq 0$ of
system~(\ref{string_eq2}) with zero initial condition. In other words, we have to find $H=\sup_u \langle
\mathfrak{f}(T),\xi\rangle$, where $\xi\in S^*$ is a dual vector, and the $\sup$ is taken over admissible controls.
Towards this end, we extend $\xi=(\xi_0,\xi_1)$ to the solution of the Cauchy problem
\begin{equation}\label{dual}
    \frac{\partial \xi}{\partial t}=-A^*\xi, \quad \xi(T)=\xi, \quad
    A^*=\left(
    \begin{array}{cc}
    0 & \Delta \\
    1 & 0 \\
    \end{array}
    \right)
\end{equation}
where $A^*$ is the adjoint operator to $A$. Note, in particular, that $\xi_1$ satisfies the wave equation. These
equations are exactly the equations for the adjoint variables of the Pontryagin maximum principle. We then have
\begin{equation}\
    \frac{d}{dt}\langle \mathfrak{f}(t),\xi(t)\rangle=\langle
    A\mathfrak{f}+Bu,\xi\rangle-\langle \mathfrak{f},A^*\xi\rangle=\langle Bu,\xi\rangle=u(t)\xi_1(0,t).
\end{equation}
A standard formal computation shows that
\begin{equation}\label{support}
    H=H_{D(T)}(\xi)=\sup_{|u|\leq1}\int_0^Tu(t)\xi_1(0,t)dt=\int_0^T|\xi_1(0,t)|dt.
\end{equation}

The reachable sets $D(T),\,T\geq 0$ are closed in the standard topology of distributions, and of course they are convex.
Therefore they are uniquely defined by their support functions.

Now we can characterize the vectors $\mathfrak{f}$ reachable from zero in time $T$ as follows:
\begin{equation}\label{reach}
    \mathfrak{f}\in{D(T)}\Leftrightarrow \langle \mathfrak{f},\xi\rangle\leq\int_0^T|\xi_1(0,t)|dt\quad \mbox{ for any }\xi\in S^*.
\end{equation}
Here the function $x,t\mapsto\xi_1(x,t)$ is defined via solution of the Cauchy problem~(\ref{dual}).

In particular, the space $\mathcal{D}$ generated by vectors $\mathfrak{f}\in\bigcup_{T\geq0}D(T)$ reachable from zero in
an arbitrary time $T\geq0$ is the dual space to the Frechet space of vectors $\xi$ with finite norms
\begin{equation}\label{norm}
    \Vert\xi\Vert=\Vert\xi\Vert_T=\int_0^T|\xi_1(0,t)|dt
\end{equation}
for any $T>0$. This space $\mathcal{D}$ coincides with the set of vectors reachable from zero in an arbitrary time
$T\geq0$ by means of a bounded (not necessarily by 1) control.

It is not difficult to compute $\xi_1(0,t)$ in terms of the Fourier coefficients of the functions $\psi=\xi_1$ and
$\phi=\xi_0$. Suppose that
\begin{equation}\label{fourier}
    \psi(x,t)=\sum_{-\infty}^\infty{\psi_n(t)e^{inx}}
\end{equation}
is the Fourier expansion of $\xi_1$. Since $\psi$ is an even and real distribution, the coefficients $\psi_n$ are real,
and $\psi_n=\psi_{-n}$, so that Eq.~(\ref{fourier}) is, in fact, the cosine-expansion:
\begin{equation}\label{cosine}
    \psi(x,t)=\sum_{n=0}^\infty{\psi_n(t)\cos{nx}}.
\end{equation}
The quantity we want to compute is
\begin{equation}
    \Vert\xi\Vert=\int_0^T|\xi_1(0,t)|dt=\int_0^T\left|\sum{\psi_n(t)}\right|dt.
\end{equation}
From Eq.~(\ref{dual}) we immediately conclude that for $n\neq0$
\begin{equation}\label{psi}
    \psi_n(t)=e^{int}a_n+e^{-int}b_n,
\end{equation}
where $a_n$, $b_n$ are constants. For $n=0$ we have $\psi_0(t)=a_0+b_0 t$. It is clear that for $n\neq0$
\begin{equation}\label{psi1}
    a_n=\frac{1}{2}\left(\psi_n+\frac{\phi_n}{in}\right), \quad b_n=\frac{1}{2}\left(\psi_n-\frac{\phi_n}{in}\right),
\end{equation}
where $\phi_n$ is the $n$th Fourier coefficient of $\phi$, and
\begin{equation}\label{psi2}
    a_0=\psi_0,\, b_0=\phi_0.
\end{equation}
The Fourier coefficients of $\phi$, like that of $\psi$, are real, and even with respect to $n$.

It follows from the above equations that
\begin{equation}\label{dalembert}
    \xi_1(0,t)=\sum_{n\neq0}\left(\psi_n\cos nt+\frac{\phi_n}{n}\sin nt\right)+\psi_0+\phi_0 t
\end{equation}
in agreement with the d'Alembert formula
\begin{equation}\label{dalembert2}
    \xi_1(x,t)=\frac12\left(\xi_1(x-t,0)+\xi_1(x+t,0)\right)-\frac12\int_{-t}^t \xi_0(y,0)dy
\end{equation}
for solution $\xi_1(x,t)$ of the wave equation.
\subsection{Natural norm in the dual space}\label{norm_section}
In view of \myref{dalembert} we conclude that
\begin{equation}\label{support2}
    \Vert\xi\Vert=\Vert\xi\Vert_T= \int_0^T\left|\sum_{n\neq0}\left(\psi_n\cos nt+\frac{\phi_n}{n}\sin nt\right)+\psi_0+\phi_0 t\right|dt.
\end{equation}
Suppose that $T$ is $\geq2\pi$. Then, the Banach norm $\Vert\xi\Vert$ is equivalent to the following  more familiar
Sobolev-type norm
\begin{equation}\label{prime}
    \Vert\xi\Vert'=\Vert\xi_1\Vert_{1}+\Vert\eta\Vert_{1}.
\end{equation}
Here $\Vert g\Vert_{1}=\int_{-T/2}^{T/2}\left|g\right|dt$ is the usual $L_1$-norm, and $\eta(t)=\int_0^t\xi_0(x)dx $.
Indeed, denote by $f$ the integrand
\begin{equation}
    f(t)=\sum_{n\neq0}\left(\psi_n\cos nt+\frac{\phi_n}{n}\sin nt\right)+\psi_0+\phi_0 t.
\end{equation}
The norm $\Vert\xi\Vert$ is equivalent to $\Vert
f\Vert_{1}=\int_{-T/2}^{T/2}\left|f\right|dt=\int_{-T/2}^{T/2}\left|f^++f^-\right|dt$, where
\begin{equation}
\begin{split}
    f^+(t)=\sum_{n\neq0}\psi_n\cos nt+\psi_0=\xi_1(t), \\
    f^-(t)=\sum_{n\neq0}\frac{\phi_n}{n}\sin nt+\phi_0t=\eta(t),
\end{split}
\end{equation}
are even and odd parts of the function $f$, respectively.

Indeed, put $g(t)=f(t)-\phi_0t$. Then, $\Vert\xi\Vert$  is equivalent to $\int_0^T |g|dt +|\phi_0|,$ while $\Vert
f\Vert_{1}$  is equivalent to $\int_{-T/2}^{T/2} |g|dt +|\phi_0|$. Since the function $g$ is $2\pi$-periodic, and
intervals of integration have length $T\geq2\pi$, both integrals $\int_0^T |g|dt,$ and $\int_{-T/2}^{T/2} |g|dt $ are
equivalent to $\int_{0}^{2\pi} |g|dt$.

The $L_1$-norms of  functions $f^\pm$ can be estimated via the $L_1$-norm of $f$:
\begin{equation}
    \Vert f^\pm\Vert_{1}\leq \Vert f\Vert_{1}.
\end{equation}
Therefore, $\Vert\xi\Vert'=\Vert f^+\Vert_{1}+\Vert f^-\Vert_{1}\leq2 \Vert f\Vert_{1}$. On the other hand, it is obvious
that
\begin{equation}
    \Vert f\Vert_{1}\leq\Vert f^+\Vert_{1}+\Vert f^-\Vert_{1}=\Vert\xi_1\Vert_{1}+\Vert\eta\Vert_{1}=\Vert\xi\Vert'.
\end{equation}

We conclude that the norms $\Vert\xi\Vert'$ and $\Vert\xi\Vert$ are equivalent indeed. Therefore, if $T\geq2\pi$ the dual
space to the Banach space with norm $\Vert\xi\Vert$ coincides with the space of pairs ${\mathfrak f}=(f_0,f_1)$, where
$\frac{\partial f_0}{\partial x}\in L_\infty$, and $f_1\in L_\infty$. Thus, it is possible to damp the string, where the
initial state ${\mathfrak f}=(f_0,f_1)$ possesses these properties, by a bounded load applied to a fixed point. Here by
damping we mean the complete stop, when not just oscillations, but also the displacement of the string as a whole is
forbidden.

\medskip
\noindent{\bf Remark.} Our arguments show that the equivalence class of the norm $\Vert\xi\Vert_T$ does not depend on $T$
provided that $T\geq2\pi$. Theorem~\ref{shape1} and Theorem~\ref{32} give a more quantitative statement of this
independence of $T$.

\subsection{Damping the oscillations}
In order to deal with damping oscillations only, it suffices to make an analog of previous computations in the
factor-space $S/{\mathcal C}$. The corresponding support functions are almost the same as those previously found. We just
have to assume that the zero-mode coefficients $\phi_0,\psi_0$ of the dual vectors $\xi$ are zero. Then, the formula for
support functions of the corresponding reachable sets $\overline{ D}(T)$ takes the following form:
\begin{equation}\label{support22}
    H_{\overline{ D}(T)}(\xi)=\int_0^T\left|\sum\left(\psi_n\cos nt+\frac{\phi_n}{n}\sin nt\right)\right|dt=\int_0^{T}\left|\xi_1(y)+\int_0^y\xi_0(x)dx\right|dy.
\end{equation}
Basically the same, but simpler arguments than that of previous subsection \ref{norm_section}, prove that states
${\mathfrak f}=(f_0,f_1)$, where $\frac{\partial f_0}{\partial x}\in L_\infty$, and $f_1\in L_\infty$ are exactly those
that can be damped.

\section{The shape of the reachable set $D(T)$}\label{sec:shape}

Following Ref.~\cite{Ovseevich2007}, one can easily find an asymptotic formula for the above support
function~(\ref{support2}). For $T>0$ define a linear  isomorphism $C(T):S\to S$ by
$C(T)\mathfrak{f}=\frac1T(f_0,f_1^T)^*$, where
\begin{equation}
    g^T(t)=\sum_{n\neq0} g_n\cos nt+\frac1T g_0, \mbox{ if } g(t)=\sum_{n\neq0} g_n\cos nt+ g_0.
\end{equation}
It is clear that
\begin{equation}\label{supportf}
    H_{C(T) D(T)}(\xi)=\frac{1}{T}\int_0^T\left|\sum_{n\neq0}\left(\psi_n\cos nt+\frac{\phi_n}{n}\sin nt\right)+\psi_0+\frac{\phi_0}{T}t\right|dt,
\end{equation}
where $\xi$ is the pair $(\xi_0,\xi_1)$, and $\xi_0(x)=\sum\phi_n\cos nx,\,\xi_1(x)=\sum\psi_n\cos nx$.

We then have the following precise statement.
\begin{theorem}\label{shape1}
    Consider problem (1) from the Introduction, corresponding to the terminal manifold ${\mathcal C}=0$.
    Then as $T\to\infty$ we have the following limit formula:
    \begin{equation}\label{asympt}
    \begin{split}
    \lim\limits_{T\to\infty}H_{C(T) D(T)}&(\xi)\\
    =&\frac{1}{2\pi}\int\limits_0^{2\pi}\int\limits_0^1\left|\sum_{n\neq0}\left(\psi_n\cos nt+\frac{\phi_n}{n}\sin nt\right)+\psi_0+\phi_0\tau\right|dtd\tau.
    \end{split}
    \end{equation}
\end{theorem}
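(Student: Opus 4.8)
The plan is to recognize the right-hand side of \myref{supportf} as an ergodic average and to pass to the limit using the classical Weyl equidistribution theorem on the torus. Write the integrand in \myref{supportf} as $F(t, \{t/2\pi\})$, where we separate the genuinely $2\pi$-periodic part $\sum_{n\neq0}(\psi_n\cos nt+\frac{\phi_n}{n}\sin nt)+\psi_0$ from the slowly varying drift term $\frac{\phi_0}{T}t$. More precisely, after the substitution $t = 2\pi(k+s)$ with $k\in\Z$, $s\in[0,1)$, breaking $[0,T]$ into $N=\lfloor T/2\pi\rfloor$ unit cells plus a remainder of bounded length, the $k$-th cell contributes $\frac{2\pi}{T}\int_0^1 \bigl|G(s)+\psi_0+\frac{2\pi\phi_0}{T}(k+s)\bigr|\,ds$, where $G(s)=\sum_{n\neq0}(\psi_n\cos 2\pi ns + \frac{\phi_n}{n}\sin 2\pi ns)$ is $1$-periodic. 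The parameter $\tau_k := \frac{2\pi}{T}(k+s)$ ranges over $[0,1]$ as $k$ runs from $0$ to $N-1$, and the increments $\tau_{k+1}-\tau_k = 2\pi/T \to 0$, so the outer sum over $k$ is a Riemann sum in the variable $\tau$.

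Concretely, I would proceed as follows. First, establish that $G \in L_1[0,1]$ (indeed it equals $\xi_1(y)-\psi_0+\int_0^y\xi_0$ restricted to a period, up to rescaling, and this lies in $L_1$ by the hypothesis that $\xi\in S^*$ has the regularity making \myref{norm} finite); this guarantees all integrals converge and lets us control error terms. Second, bound the contribution of the leftover interval of length $<2\pi$ at the right end of $[0,T]$ by $O(1/T)\cdot(\|G\|_{L_1}+|\psi_0|+|\phi_0|)$, which vanishes in the limit. Third, within the $k$-th cell, replace the affine function $\tau \mapsto \frac{2\pi}{T}(k+s)$ by the constant $\tau = \frac{2\pi k}{T}$ at the cost of an error $\leq \frac{2\pi}{T}|\phi_0| \cdot \frac{2\pi}{T}$ per cell, i.e. $O(1/T)$ after summing over the $O(T)$ cells; this decouples the fast variable $s$ from the slow variable $\tau$. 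Fourth, the remaining expression is exactly $\frac{2\pi}{T}\sum_{k=0}^{N-1} \Phi\bigl(\tfrac{2\pi k}{T}\bigr)$ with $\Phi(\tau)=\int_0^1|G(s)+\psi_0+\phi_0\tau|\,ds$, and since $\Phi$ is continuous (even Lipschitz) in $\tau$, this Riemann sum converges to $\int_0^1\Phi(\tau)\,d\tau$. Undoing the substitution $s = t/2\pi$ in the inner integral, $\int_0^1|G(s)+\psi_0+\phi_0\tau|\,ds = \frac{1}{2\pi}\int_0^{2\pi}\bigl|\sum_{n\neq0}(\psi_n\cos nt+\frac{\phi_n}{n}\sin nt)+\psi_0+\phi_0\tau\bigr|\,dt$, which yields exactly \myref{asympt}.

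The main obstacle I anticipate is not any single estimate but the bookkeeping needed to handle the drift term $\phi_0 t/T$ cleanly: without it the statement is a textbook fact that the time-average of a periodic function equals its space-average, but the drift couples the "slow" scale $\tau\in[0,1]$ to the "fast" periodic oscillation, and one must argue that on each period the oscillatory part sees essentially a frozen value of the drift. The key quantitative input making this work is that the total variation of the drift over one period, $2\pi|\phi_0|/T$, tends to zero, so the cell-wise freezing error is summable to something $O(1/T)$. A secondary technical point is the low regularity of $G$ (only $L_1$, not continuous), which forbids naive pointwise arguments inside a cell; this is circumvented because we never need to freeze $s$ — only $\tau$ — and the map $\tau\mapsto\Phi(\tau)$ is automatically Lipschitz with constant $|\phi_0|$ regardless of how rough $G$ is, since $\bigl|\,|a+\phi_0\tau|-|a+\phi_0\tau'|\,\bigr|\leq|\phi_0||\tau-\tau'|$ pointwise in $s$ and then integrate. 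Once these two observations are in place the proof is a routine $\varepsilon$-management of Riemann sums.
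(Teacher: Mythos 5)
Your proposal is correct and follows essentially the same route as the paper's proof: split $[0,T]$ into periods of length $2\pi$, freeze the slow drift $\phi_0 t/T$ on each period at the cost of an $O(1/T)$ total error, and recognize the resulting sum as a Riemann sum for the double integral. The only differences are minor refinements — you treat general $T$ (bounding the leftover partial period) where the paper restricts to $T=2\pi N$, and you make explicit the Lipschitz continuity of $\tau\mapsto\Phi(\tau)$ that justifies convergence of the Riemann sums, which the paper uses implicitly.
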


\begin{proof}
The easy proof follows arguments from the basic results of Ref.~\cite{Ovseevich2007}, which we reproduce here for the
reader's convenience. It suffices to consider $T$ of the form $T=2\pi N,\,N\in\Z$. Then, we can rewrite
Eq.~(\ref{supportf}) in the following form:
\begin{equation}\label{supportf2}
    H_{C(T) D(T)}(\xi)=\frac{1}{2\pi N}\sum_{i=0}^{N-1}\int_0^{2\pi}\left|g(t)+{\phi_0}\left(\frac{i}{k}+\frac{t}{2\pi N}\right)\right|dt,
\end{equation}
where
\begin{equation}
    g(t)=\sum_{n\neq0}\left(\psi_n\cos nt+\frac{\phi_n}{n}\sin nt\right)+\psi_0
\end{equation}
is a $2\pi$-periodic function. We then note that the right-hand side of Eq.~(\ref{supportf2}) equals
\begin{equation}\label{supportf3}
    \frac{1}{2\pi N}\sum_{i=0}^{N-1}\int_0^{2\pi}\left|g(t)+{\phi_0}\frac{i}{k}\right|dt+O\left(\frac1N\right),
\end{equation}
which is the Riemann sum for the integral over $[0,1]$ of the continuous function
\begin{equation}
    \tau\mapsto \frac{1}{2\pi}\int_0^{2\pi}\left|g(t)+{\phi_0}\tau\right|dt.
\end{equation}
Since the Riemann sums converge to the integral
\begin{equation}
    \frac{1}{2\pi}\int_0^1\int_0^{2\pi} \left|g(t)+{\phi_0}\tau\right|dtd\tau \mbox{ as } N\to\infty,
\end{equation}
the statement is proved.
\end{proof}

We remind that the shape $\Sh\Omega$ of a set $\Omega\subset S$ is the orbit of the group of linear (topological)
isomorphisms of the space $S$ acting on $\Omega$. In terms of shapes one can say that the limit shape
\begin{equation}
    \Sh_{\infty}=\lim\limits_{T\to\infty}\Sh D(T)
\end{equation}
is related to the convex body $\Omega$ corresponding to the support function
\begin{equation}\label{asympt2}
    H_{\Omega}(\xi)=\frac{1}{2\pi}\int_0^{2\pi}\int_0^1\left|\sum_{n\neq0}\left(\psi_n\cos nt+\frac{\phi_n}{n}\sin nt\right)+\psi_0+\phi_0\tau\right|dtd\tau.
\end{equation}
This means that the shape of the convex set with the support function (\ref{asympt2}) coincides with $\Sh_{\infty}$.
Similarly, in the reduced space $\overline{S}$ we have the following precise result.
\begin{theorem}\label{32}
Consider problems (2)-(3) from the Introduction, corresponding to the terminal manifolds ${\mathcal C}=\R\times 0$, or
${\mathcal C}=\R^2$. Then, the following limit formula holds:
\begin{equation}\label{asympt3}
\begin{split}
    &\lim\limits_{T\to\infty}\frac1TH_{\overline{D}(T)}(\xi)=\\
    &\frac{1}{2\pi}\int_0^{2\pi}\left|\sum\left(\psi_n\cos nt+\frac{\phi_n}{n}\sin nt\right)\right|dt=\frac{1}{2\pi}\int_0^{2\pi}\left|\zeta(t)\right|dt.
\end{split}
\end{equation}
where $\zeta(t)=\xi_1(t)+\int_0^t\xi_0(x)dx$.
\end{theorem}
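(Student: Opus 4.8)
The plan is to reduce \myref{asympt3}, exactly as in the proof of Theorem~\ref{shape1}, to the elementary time-averaging fact for periodic functions, the point being that in the factor space the linearly growing zero mode that complicated Theorem~\ref{shape1} has been removed. First I would record that the support function of $\overline{D}(T)$ in the factor space is the one already computed in \myref{support22}, namely $H_{\overline{D}(T)}(\xi)=\int_0^{T}|\zeta(t)|\,dt$ with
\[
    \zeta(t)=\xi_1(t)+\int_0^t\xi_0(x)\,dx=\sum_{n\neq0}\left(\psi_n\cos nt+\frac{\phi_n}{n}\sin nt\right)+\psi_0,
\]
where the $\psi_0$ term is present in problem (2) (there only $\phi_0=0$) and absent in problem (3) (there $\phi_0=\psi_0=0$); in either case $\zeta$ is a genuine $2\pi$-periodic function. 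For $\xi$ lying in the annihilator of $\mathcal{C}$ inside the Banach space carrying the norm \myref{norm}, the (factor-space analog of the) norm equivalence of Subsection~\ref{norm_section} guarantees $\zeta\in L_1[0,2\pi]$, so all integrals below are finite; for the remaining $\xi$ both sides of \myref{asympt3} are $+\infty$ and there is nothing to prove.

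Next I would run the two-step argument. For $T$ of the special form $T=2\pi N$, $N\in\Z_{>0}$, periodicity of $\zeta$ gives the \emph{exact} identity
\[
    \frac1T H_{\overline{D}(T)}(\xi)=\frac{1}{2\pi N}\sum_{i=0}^{N-1}\int_0^{2\pi}|\zeta(t)|\,dt=\frac{1}{2\pi}\int_0^{2\pi}|\zeta(t)|\,dt ,
\]
so the asserted limit holds along this subsequence. For general $T$, write $T=2\pi N+r$ with $0\le r<2\pi$; then $H_{\overline{D}(T)}(\xi)-H_{\overline{D}(2\pi N)}(\xi)=\int_{2\pi N}^{T}|\zeta(t)|\,dt=\int_0^{r}|\zeta(t)|\,dt\le\int_0^{2\pi}|\zeta(t)|\,dt$ is bounded independently of $T$. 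Dividing by $T$ therefore changes the value by $O(1/T)$, and $\frac1T H_{\overline{D}(T)}(\xi)\to\frac{1}{2\pi}\int_0^{2\pi}|\zeta(t)|\,dt$, which is \myref{asympt3}. (In the displayed right-hand side the sum $\sum(\psi_n\cos nt+\frac{\phi_n}{n}\sin nt)$ is read with its $n=0$ term equal to $\psi_0$, so it is indeed $\zeta(t)=\xi_1(t)+\int_0^t\xi_0(x)\,dx$.)

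I do not expect a real obstacle here — the argument is essentially the one for Theorem~\ref{shape1} with the $\phi_0\tau$ term deleted, hence ``basically the same, but simpler''. The only point deserving a line of care is the a~priori finiteness of $\int_0^{2\pi}|\zeta(t)|\,dt$, i.e. that the relevant $\xi$ really give integrable $\zeta$; this is why I would invoke the norm equivalence of Subsection~\ref{norm_section} (whose factor-space version is immediate) rather than redo that computation.
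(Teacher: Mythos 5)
Your proposal is correct and follows exactly the route the paper intends: the paper states Theorem~\ref{32} without a separate proof, asserting it follows ``similarly'' to Theorem~\ref{shape1}, and your argument is precisely that proof with the $\phi_0\tau$ term absent, so that periodicity of $\zeta$ makes the time average stabilize exactly along $T=2\pi N$ with an $O(1/T)$ correction for general $T$. Your side remarks (which zero modes actually vanish in problems (2) versus (3), and the a priori integrability of $\zeta$) are accurate refinements of what the paper leaves implicit.
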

Note that the operator of multiplication by $(T)^{-1}$ in the factor-space $\overline S$ is induced by the operator
$C(T)$, and Eq.~(\ref{asympt3}) describes the limit shape $\lim\limits_{T\to\infty}\Sh \overline{D}(T)$. Denote by
$\Omega$ the convex body such that its support function is given by the right-hand side of \myref{asympt3}:
\begin{eqnarray}\label{asympt4}
    H_{\Omega}(\xi)=\frac{1}{2\pi}\int_0^{2\pi}\left|\sum\left(\psi_n\cos nt+\frac{\phi_n}{n}\sin nt\right)\right|dt=\frac{1}{2\pi}\int_0^{2\pi}\left|\zeta(t)\right|dt.
\end{eqnarray}
According to Theorem~\ref{32} the set $T\Omega$  is an approximation of $D(T)$ if $T$ is large.

\section{Dry-friction control}\label{sec:control}

Our control design is based on the following idea: The optimal control at state ${\mathfrak f}$ implements the steepest
descent in the direction normal to boundaries of the reachable sets $D(T)$. Our control implements the steepest descent
in the direction normal to boundaries of the approximate reachable sets $T\Omega$, where $\Omega$ is defined via
Eq.~(\ref{asympt4}). This means that in notations of Eq.~(\ref{asympt3}) we have
\begin{equation}\label{Legendre20}
    u({\mathfrak f})=-\sign\langle B,\xi\rangle=-\sign\xi_1(0)=-\sign\zeta(0),
\end{equation}
where the momentum $\xi$ is to be found via the equation
\begin{equation}\label{asympt5}
    T^{-1}{\mathfrak f}=\frac{\partial  H_{\Omega}}{\partial\xi}(\xi),
\end{equation}
or, equivalently, ${\mathfrak f}=(f_0,f_1)$, where
\begin{equation}\label{asympt6}
    T^{-1}f_0(x)=-\int_0^{x}(\sign \zeta(y))^-dy,\quad T^{-1}f_1(x)=(\sign \zeta(x))^+,
\end{equation}
where the notation $f^\pm$ stands for even/odd part of the function $f$:
\begin{equation}\label{pm}
    f^\pm(x)=\frac12(f(x)\pm f(-x)).
\end{equation}

These identities are to be understood as inclusions, because the $\sign$-map is  multivalued. Namely, their precise
meaning is
\begin{equation}\label{asympt7}
    T^{-1}f_0(x)=-\int_0^{x}\phi(y)^-dy,\quad T^{-1}f_1(x)=\psi(x)^+,
\end{equation}
where $\phi(y)\in\sign \zeta(y),$ and $\psi(x)\in\sign \zeta(x)$.

\subsection{Duality transform}
We invoke a general duality transformation related to Eq.~(\ref{asympt5}). Toward this end, we denote the function
$H_\Omega$ just by $H=H(\xi)$, and the factor $T$ by $\rho({\mathfrak f})$. Then the relation between $H$ and $\rho$ is
similar to the Legendre transformation:
\begin{equation}\label{Legendre}
    \langle {\mathfrak f},\xi\rangle=\rho({\mathfrak f}) H(\xi),
    \quad
    \rho({\mathfrak f})=\max_{H(\xi)\leq1}\langle{\mathfrak f},\xi\rangle,
    \quad
    H(\xi)=\max_{\rho({\mathfrak f})\leq1}\langle {\mathfrak f},\xi\rangle,
\end{equation}
where the correspondence ${\mathfrak f}\rightleftarrows \xi$ has the following form:
\begin{eqnarray}
    {\mathfrak f}=\rho({\mathfrak f})\frac{\partial H}{\partial \xi}(\xi), \label{Legendre1}
    \\
    \xi=H(\xi)\frac{\partial \rho}{\partial{\mathfrak f}}({\mathfrak f}). \label{Legendre2}
\end{eqnarray}
Here $\xi$ and ${\mathfrak f}$ are the points where the maximums in (\ref{Legendre}) are attained. These relations make
sense provided that $H$ and $\rho$ are norms, i.e. homogeneous of degree 1 convex  functions such that the sublevel sets
$\{H(\xi)\leq1\}$ and $\{\rho({\mathfrak f})\leq1\}$ are convex bodies. These sublevels are mutually polar to each other.
In other words, if $\Omega=\{\rho({\mathfrak f})\leq1\}$, and ${\Omega}^{\circ}=\{H(\xi)\leq1\}$, then
$\Omega=\{{\mathfrak f}:\langle {\mathfrak f},\xi\rangle\leq1,\,\xi\in{\Omega}^\circ\}$ and vice versa. In the language
of Banach spaces, the normed spaces $(\mathbb{V},\rho)$ and $(\mathbb{V}^*,H)$ are dual to each other. The derivatives in
Eq.~(\ref{Legendre2}) should be understood as subgradients. If the functions $H$ and $\rho$ are differentiable then
Eq.~(\ref{Legendre2}) has the classical meaning. If one of the functions $H$ and $\rho$ is differentiable and strictly
convex, then, the other one is also so.

In the cases at hand we need to calculate the dual function $\rho$ for the function $H=H_\Omega$ from
Eq.~(\ref{asympt4}). We then arrive at the following result.

\begin{theorem}\label{rho00}
Consider problems (2) -- (3) from the Introduction, corresponding to the terminal manifolds ${\mathcal C}=\R\times 0$, or
${\mathcal C}=\R^2$.
\begin{enumerate}
    \item If ${\mathcal C}=\R^2$, then $\rho({\mathfrak f})={2\pi}\left|\frac{\partial f_0}{\partial x}+f_1\right|_\infty$,
    where the norm $\displaystyle\left|\phi\right|_\infty$ of a function $\phi$ on the torus $\T=\R/2\pi\Z$ is $\displaystyle\inf_{c}\sup_{x\in\T}|\phi(x)+c|$,
    where $c\in\R$ is an arbitrary constant.
    \item If ${\mathcal C}=\R\times0$, then $\rho({\mathfrak f})={2\pi}\left|\frac{\partial f_0}{\partial x}+f_1\right|_\infty$,
    where the norm $\displaystyle\left|\phi\right|_\infty$ is the $\sup$-norm of the function $\phi$ on the torus $\T=\R/2\pi\Z$.
\end{enumerate}
\end{theorem}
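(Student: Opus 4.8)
The plan is to compute the polar dual of the convex body $\Omega$ whose support function is given by \myref{asympt4}, and to identify the resulting gauge function $\rho$. The starting observation is that the support function $H_\Omega(\xi)$ depends on $\xi=(\xi_0,\xi_1)$ only through the single function $\zeta(t)=\xi_1(t)+\int_0^t\xi_0(x)\,dx$, namely $H_\Omega(\xi)=\frac{1}{2\pi}\int_0^{2\pi}|\zeta(t)|\,dt=\frac{1}{2\pi}\|\zeta\|_{L_1(\T)}$ in case (2). So the first step is to make the change of variables $\xi\mapsto\zeta$ explicit: as $(\xi_0,\xi_1)$ ranges over the relevant dual space (with vanishing zero modes when ${\mathcal C}=\R^2$), $\zeta$ ranges over the corresponding space of functions on $\T$ (all of $L_1(\T)$, respectively the mean-zero subspace). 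Under this identification $H_\Omega$ is, up to the factor $2\pi$, simply the $L_1$-norm, so the dual body $\Omega$ is — again up to duality-respecting rescaling — the unit ball of the dual norm on the predual side, i.e. an $L_\infty$-type ball, and the factor $2\pi$ in $H$ becomes a factor $2\pi$ (not $1/2\pi$) in $\rho$ by the homogeneity relation in \myref{Legendre}.

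Next I would pin down exactly which linear functional on ${\mathfrak f}=(f_0,f_1)$ is being paired with $\zeta$. From the pairing $\langle{\mathfrak f},\xi\rangle=\langle f_0,\xi_0\rangle+\langle f_1,\xi_1\rangle$ and an integration by parts that turns $\langle f_0,\xi_0\rangle$ into a pairing against $\int_0^t\xi_0$, one rewrites $\langle{\mathfrak f},\xi\rangle$ as the pairing of $\zeta$ against the combination $\frac{\partial f_0}{\partial x}+f_1$ (this is exactly the combination that appears in the theorem, and also the one appearing in \myref{support22} and \myref{asympt6}). Thus $\rho({\mathfrak f})=\max\{\langle{\mathfrak f},\xi\rangle : H_\Omega(\xi)\le 1\}=2\pi\max\{\int_\T(\tfrac{\partial f_0}{\partial x}+f_1)\,\zeta : \|\zeta\|_{L_1(\T)}\le 1\}$, which is $2\pi$ times the $L_\infty$-norm of $\frac{\partial f_0}{\partial x}+f_1$. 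The only difference between cases (1) and (2) is the constraint on the zero Fourier mode of $\zeta$: in case (2), ${\mathcal C}=\R\times 0$, $\zeta$ runs over all of $L_1(\T)$ and one gets the genuine $\sup$-norm; in case (1), ${\mathcal C}=\R^2$, one factors out the constants $\psi_0$ as well, so $\zeta$ is constrained to have zero mean, and the dual of mean-zero $L_1$ is $L_\infty$ modulo constants, giving the seminorm $\inf_c\sup_x|\phi(x)+c|$. This dichotomy is precisely what is claimed.

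The one point requiring genuine care — the main obstacle — is the functional-analytic bookkeeping: verifying that the sublevel set $\{H_\Omega\le 1\}$ is indeed a convex body in the relevant (reduced) dual space, that the change of variables $(\xi_0,\xi_1)\leftrightarrow\zeta$ is a topological isomorphism onto the stated $L_1$-space (or its mean-zero subspace), and that the predual of that $L_1$-space is the $L_\infty$-space (resp. $L_\infty$ modulo constants) appearing in the statement — so that the max defining $\rho$ is attained and \myref{Legendre} legitimately applies. The equivalence of norms established in Section~\ref{norm_section} and \ref{sec:support} (identifying the dual of the $\|\cdot\|_T$-norm with pairs $(f_0,f_1)$ with $\frac{\partial f_0}{\partial x}, f_1\in L_\infty$) already does most of this work; what remains is to track the zero-mode quotient correctly in each of the two cases and to confirm that passing to shapes and to the asymptotic body $\Omega$ does not alter the equivalence class. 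Once that is in place, the computation of $\rho$ is the short Hölder-duality argument above, and the factor $2\pi$ comes out automatically.
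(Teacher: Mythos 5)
Your proposal is correct and follows essentially the same route as the paper: the paper's proof also reduces the pairing $\langle{\mathfrak f},\xi\rangle$ to $2\pi\int_\T\bigl(\frac{\partial f_0}{\partial x}+f_1\bigr)\zeta$ (using the vanishing of the odd--even cross terms, which is your integration-by-parts step) and then invokes $L_1$--$L_\infty$ duality, with the zero-mode constraint on $\zeta$ accounting for the seminorm $\inf_c\sup_x|\phi(x)+c|$ in the case ${\mathcal C}=\R^2$. The only cosmetic difference is that the paper verifies the relation in the direction $H(\xi)=\frac{1}{2\pi}\max_{\rho_0({\mathfrak f})\le 1}\langle{\mathfrak f},\xi\rangle$ rather than computing $\rho$ from $H$ as you do.
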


\begin{proof}
We consider only the problem (2) of Theorem~\ref{rho00}, because the problem (3) is quite similar. We note that in case
(1)
\begin{equation}\label{infty_norm}
    \left|\phi\right|_\infty=\frac12\left(\sup \phi-\inf \phi\right).
\end{equation}
We note that $\frac{\partial f_0}{\partial x}$ is an odd function, while $f_1$ is even. This implies that the norm
$\rho({\mathfrak f})$ is equivalent to (although does not coincide with) $\max\left(\left|\frac{\partial f_0}{\partial
x}\right|_\infty,\left|f_1\right|_\infty\right)$.

To prove the theorem we define $\rho_0({\mathfrak f})$ by the formula
\begin{equation}
    \rho_0({\mathfrak f})=\left|\frac{\partial f_0}{\partial x}+f_1\right|_\infty,
\end{equation}
and check that $H(\xi)=\frac{1}{2\pi}\max\limits_{\rho_0({\mathfrak f})\leq1}\langle {\mathfrak f},\xi\rangle$. Toward
this end, put
\begin{equation}
    \psi_0(t)=\int_0^t\xi_0(x)dx,\,\psi_1(t)=\xi_1(t),\,\psi=\psi_0+\psi_1,
\end{equation}
and
\begin{equation}
    \phi_0=\frac{\partial f_0}{\partial x},\,\phi_1=f_1,\,\phi=\phi_0+\phi_1.
\end{equation}

Denote by $\int_\T f$, where $\T=\R/2\pi\Z$, the normalized integral $\frac{1}{2\pi}\int_0^{2\pi} f(t)dt$. Remind that
$\langle {\mathfrak f},\xi\rangle$ stands for $\int_0^{2\pi} \langle {\mathfrak f}(t),\xi(t)\rangle dt$.

Then,
\begin{equation}\label{identity}
    \int_\T \phi\psi=\int_\T \phi_0\psi_0+\int_\T \phi_1\psi_1=\frac{1}{2\pi}\langle {\mathfrak f},\xi\rangle,
\end{equation}
because the integrals $\int_\T \phi_0\psi_1dt,\,\int_\T \phi_1\psi_0 dt$ vanish, being integrals of odd functions over
$\T=\R/2\pi\Z$. It is clear from Eq.~(\ref{identity}), that the maximum of $\int_\T \phi\psi dt$, taken over $\phi$ such
that $|\phi|_\infty\leq1$, coincides with the maximum of $\langle {\mathfrak f},\xi\rangle$, taken over ${\mathfrak f}$
such that $\rho_0({\mathfrak f})\leq1$. However, it is trivial that the maximum of $\int_\T \phi\psi =\int_\T |\psi| $.
The latter value, according to Eq.~(\ref{asympt4}), equals $H(\xi)$.
\end{proof}

One can regard our computation of the norm $\rho$ as an a priori estimate for solutions of the wave equation.
\begin{theorem}\label{RhoEstimate}
    Suppose ${\mathfrak f}=(f_0,f_1),$ is a solution of the Cauchy problem
    \begin{equation}\label{string_eq22}
    \frac{\partial\mathfrak{f}}{\partial t}=A\mathfrak{f}+Bu, \quad |u|\leq1,\quad \mathfrak{f}(0)=0,
    \end{equation}
    where $B=(0,\delta)$, $u=u(t)$.
    If $T\geq2\pi$ we then have
    \begin{equation}
        \rho(\mathfrak{f}(T))={2\pi}\left|\frac{\partial f_0}{\partial x}+f_1\right|_\infty\leq{T}.
    \end{equation}
\end{theorem}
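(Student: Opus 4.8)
The plan is to use the duality of Theorem~\ref{rho00} to turn the asserted bound into an inclusion of the reachable set into a dilate of the limit body $\Omega$, and then to check that inclusion through the explicit support functions found in Sections~\ref{sec:support}--\ref{sec:control}. The equality $\rho(\mathfrak f(T))=2\pi\bigl|\tfrac{\partial f_0}{\partial x}+f_1\bigr|_\infty$ is simply Theorem~\ref{rho00} applied to the state $\mathfrak f(T)=(f_0(\cdot,T),f_1(\cdot,T))$, so the real content is the inequality $\rho(\mathfrak f(T))\le T$. Since $\rho$ is the gauge of $\Omega$, dual to $H=H_\Omega$ in the sense of \eqref{Legendre}, one has $\rho(\mathfrak f(T))=\max_{H_\Omega(\xi)\le1}\langle\mathfrak f(T),\xi\rangle$; and as $\mathfrak f(T)$ is reachable from $0$ in time $T$ by an admissible control, $\langle\mathfrak f(T),\xi\rangle\le H_{\overline D(T)}(\xi)$ for every $\xi$. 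Hence it is enough to prove $H_{\overline D(T)}(\xi)\le T\,H_\Omega(\xi)$ for all $\xi$, that is, $\overline D(T)\subseteq T\Omega$.

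Now I would substitute the explicit formulas. By \eqref{support22}, $H_{\overline D(T)}(\xi)=\int_0^T|\xi_1(0,t)|\,dt$, and in the factor space the zero Fourier mode of $\xi$ is suppressed, so $\zeta(t):=\xi_1(0,t)$ is $2\pi$-periodic; by \eqref{asympt4}, $H_\Omega(\xi)=\tfrac1{2\pi}\int_0^{2\pi}|\zeta(t)|\,dt$. The whole statement thus reduces to the elementary inequality
\[
\int_0^T|\zeta(t)|\,dt\ \le\ \frac{T}{2\pi}\int_0^{2\pi}|\zeta(t)|\,dt ,\qquad T\ge2\pi ,
\]
for a $2\pi$-periodic $\zeta$. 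Following the device used in the proof of Theorem~\ref{shape1}, I would first take $T=2\pi N$ with $N\in\Z$, $N\ge1$: periodicity gives $\int_0^{2\pi N}|\zeta|=N\int_0^{2\pi}|\zeta|$, so the two sides coincide (in fact $\overline D(2\pi N)=2\pi N\,\Omega$), and the bound is sharp. For a general $T\ge2\pi$ write $T=2\pi N+r$, $N\ge1$, $0\le r<2\pi$, split $\int_0^T=\int_0^{2\pi N}+\int_{2\pi N}^{2\pi N+r}$, use periodicity on the first part, and absorb the partial-period tail $\int_0^r|\zeta|$ against the remaining fraction $\tfrac r{2\pi}\int_0^{2\pi}|\zeta|$; then pass to the supremum over $\{H_\Omega(\xi)\le1\}$ and invoke the duality to get $\rho(\mathfrak f(T))\le T$.

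The delicate point is exactly this last, partial-period step: the periodicity identity is exact only over full periods, and keeping the constant equal to $T$ (rather than to $2\pi\lceil T/2\pi\rceil$) for $T\notin2\pi\Z$ is where the argument must be done carefully, using that $N\ge1$ complete periods are already present. A more robust alternative, matching the ``a priori estimate for the wave equation'' remark, is to argue directly from d'Alembert's formula \eqref{dalembert2}: applying $\partial_t-\partial_x$ to \eqref{string_eq} shows that $w:=\tfrac{\partial f_0}{\partial x}+f_1=(\partial_x+\partial_t)f$ solves the transport equation $(\partial_t-\partial_x)w=u(t)\delta$ with zero initial data, so along each characteristic $w(\cdot,T)$ is the algebraic sum of the values $u(s)$ picked up at the times $s\in(0,T)$ when that characteristic meets the point $0$; since $|u|\le1$ and each characteristic has only $\lfloor T/2\pi\rfloor$ or $\lceil T/2\pi\rceil$ such passages, $w(\cdot,T)$ is bounded a.e. accordingly, and the factor-space norm $|\cdot|_\infty$ --- which allows subtracting a suitable constant --- then turns this into $\rho(\mathfrak f(T))=2\pi|w(\cdot,T)|_\infty\le T$ for $T\ge2\pi$, the same balancing of a short arc against the full periods reappearing as the one thing needing care.
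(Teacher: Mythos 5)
Your duality reduction is exactly the route the paper intends: Theorem~\ref{RhoEstimate} is offered with no separate proof, as an immediate restatement of Theorem~\ref{rho00} together with the support function \eqref{support22}, and your chain $\rho(\mathfrak f(T))=\max_{H_\Omega(\xi)\le1}\langle\mathfrak f(T),\xi\rangle\le\sup_{H_\Omega(\xi)\le1}\int_0^T|\zeta(t)|\,dt$ with $\zeta$ $2\pi$-periodic is the right skeleton; your observation that $\overline D(2\pi N)=2\pi N\,\Omega$ is correct and sharp.

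The step you defer as ``delicate'', however, is not merely delicate --- it cannot be carried out. The inequality $\int_0^r|\zeta|\le\frac r{2\pi}\int_0^{2\pi}|\zeta|$ fails whenever $\zeta$ is concentrated in $[0,r)$, and correspondingly $\overline D(T)\not\subseteq T\Omega$ for $T\notin2\pi\Z$. Indeed the conclusion of the theorem itself fails in that case: write $T=2\pi N+r$ with $0<r<2\pi$ and take $u=\pm1$ on small neighbourhoods of the progressions $a+2\pi k$ and $b+2\pi k$ with $a,b\in(0,r)$; the transport representation \eqref{model3} then produces a reachable $g(\cdot,T)=\frac{\partial f_0}{\partial x}+f_1$ equal to $N+1$ and to $-(N+1)$ on sets of positive measure, so that $\rho(\mathfrak f(T))=2\pi(N+1)>T$ in either of the two norms of Theorem~\ref{rho00}. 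What both of your routes actually deliver is $\rho(\mathfrak f(T))\le2\pi\lceil T/2\pi\rceil\le T+2\pi$, with equality to $T$ exactly when $T\in2\pi\Z$: each characteristic meets the load point at most $\lceil T/2\pi\rceil$ times, or equivalently at most one incomplete period contributes an extra $\int_0^{2\pi}|\zeta|$. You should prove and use that version (or restrict to $T\in2\pi\Z$); the additive $2\pi$ is harmless in every application of the theorem, namely Corollary~\ref{RhoBound}, the bound \eqref{asymp_2}, and the $O(1/T+1/M)$ estimate \eqref{approx_T2} of Theorem~\ref{AsymptoticOptimality}. Leaving the exact constant $T$ as a point ``needing care'' is a genuine gap, because no amount of care closes it.
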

In particular, the following a priori bound holds true:
\begin{corollary}\label{RhoBound}
    Suppose ${\mathfrak f}=(f_0,f_1),$ is a solution of
    $\frac{\partial \mathfrak{f}}{\partial t}=A\mathfrak{f}+Bu$, $|u|\leq1$,
    while $\widetilde {\mathfrak f}$ is control-free: $\frac{\partial \mathfrak{f}}{\partial t}=A\mathfrak{f}$,
    and $\widetilde {\mathfrak f}(0)={\mathfrak f}(0)$.
    Then provided that $T\geq2\pi$ we have
    \begin{equation}
        \left|(f_1-\widetilde{f}_1)(T)\right|_\infty\leq \frac1{2\pi}T.
    \end{equation}
\end{corollary}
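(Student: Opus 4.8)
The plan is to reduce everything to Theorem~\ref{RhoEstimate} by subtracting off the free motion. Set $\mathfrak{g}=\mathfrak{f}-\widetilde{\mathfrak{f}}=(g_0,g_1)$. Since $\mathfrak{f}$ and $\widetilde{\mathfrak{f}}$ satisfy linear equations that differ only by the forcing term $Bu$ and have the same initial data, the difference solves the Cauchy problem
\begin{equation}
    \frac{\partial\mathfrak{g}}{\partial t}=A\mathfrak{g}+Bu,\quad |u|\leq1,\quad \mathfrak{g}(0)=0,
\end{equation}
driven by the very same control $u=u(t)$. Hence Theorem~\ref{RhoEstimate} applies directly to $\mathfrak{g}$ and gives, for $T\geq2\pi$,
\begin{equation}
    2\pi\left|\frac{\partial g_0}{\partial x}(T)+g_1(T)\right|_\infty=\rho(\mathfrak{g}(T))\leq T .
\end{equation}

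What remains is to pass from this bound on $\frac{\partial g_0}{\partial x}+g_1$ to a bound on $g_1$ alone, and for this I would invoke the parity already used in the proof of Theorem~\ref{rho00}: on $\T$ the velocity $g_1(T)$ is even while the spatial derivative $\frac{\partial g_0}{\partial x}(T)$ is odd. Writing $\phi=\frac{\partial g_0}{\partial x}(T)+g_1(T)$, the even part is recovered by $g_1(T)(x)=\tfrac12\bigl(\phi(x)+\phi(-x)\bigr)$, so that $|g_1(T)(x)+c|\leq\tfrac12\bigl(|\phi(x)+c|+|\phi(-x)+c|\bigr)$ for every constant $c$; taking suprema over $x$ and then the infimum over $c$ shows $\left|g_1(T)\right|_\infty\leq\left|\phi\right|_\infty$, and the argument works for either reading of $\left|\cdot\right|_\infty$ in Theorem~\ref{rho00} (genuine $\sup$-norm, or $\sup$-norm modulo constants). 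Combining the two displays and recalling that $g_1=f_1-\widetilde{f}_1$ yields
\begin{equation}
    \left|(f_1-\widetilde{f}_1)(T)\right|_\infty\leq\left|\phi\right|_\infty=\frac{1}{2\pi}\rho(\mathfrak{g}(T))\leq\frac{1}{2\pi}T,
\end{equation}
which is exactly the asserted bound.

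I do not expect any real obstacle here: the whole mathematical substance sits in Theorem~\ref{RhoEstimate}, and the corollary is just its translation-invariant, ``free-motion-subtracted'' reformulation. The only point deserving a line of care is the reduction from $\left|\frac{\partial g_0}{\partial x}+g_1\right|_\infty$ to $\left|g_1\right|_\infty$, which is disposed of by the even/odd splitting above (equivalently, by noting that projecting a pair onto its even-velocity component does not enlarge that seminorm).
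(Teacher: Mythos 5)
Your proof is correct and follows exactly the route the paper intends: the corollary is stated as an immediate consequence of Theorem~\ref{RhoEstimate}, obtained by applying that theorem to the difference $\mathfrak{f}-\widetilde{\mathfrak{f}}$ (which by linearity solves the controlled equation with zero initial data) and then discarding the odd part $\frac{\partial g_0}{\partial x}$ via the even/odd splitting. The parity step you spell out is the only detail the paper leaves implicit, and you handle it correctly for both readings of $\left|\cdot\right|_\infty$.
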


\section{Computation of the basic control}\label{sec:computation}

Consider Problem (2) from the Introduction, corresponding to the terminal manifolds ${\mathcal C}=\R\times0$. In order to
find the control we need to solve Eqs.~(\ref{asympt6}) as explicitly as possible. In other words, we have to find
function $\zeta$ such that
\begin{equation}\label{asympt8}
    T^{-1}\frac{\partial f_0}{\partial x}(x)\in-\sign \zeta(x),\quad T^{-1}f_1(x)\in\sign \zeta(x).
\end{equation}
Our discussion of duality, in particular Eq.~\myref{Legendre2} shows that the solution is given by
\begin{equation}
    T=\rho({\mathfrak f})={2\pi}\left|\frac{\partial f_0}{\partial x}+f_1\right|_\infty,\quad\zeta=\frac{\partial\rho}{\partial {\mathfrak f_1}}({\mathfrak f}).
\end{equation}

The final expression for the control has the following form:
\begin{eqnarray}\label{control}
    u\left(\F\right)=-\sign\zeta(0)=-\sign f_1(0),
\end{eqnarray}
where we take into account Eq.~(\ref{asympt6}) and the vanishing at $0$ of the odd part of the function
$x\mapsto\sign\zeta(x)$. Thus, we obtain indeed a generalization of the dry friction, for it acts with maximal possible
amplitude against the velocity, because $f_1(0)$ is exactly the velocity of the point, where the load is applied. Control
(\ref{control}) leads to the nonlinear wave equation of the following form:
\begin{equation}\label{string_eq220}
    \frac{\partial^2 f}{\partial t^2}=\frac{\partial^2 f}{\partial x^2}-\sign\left(\frac{\partial f}{\partial t}(0)\right)\delta
\end{equation}
governing the damping process. There are no a standard existence and uniqueness theorem for the Cauchy problem for
Eq.~(\ref{string_eq220}). The problem is to extend to infinite-dimensional case the classical results of
Filippov~\cite{Filippov1988} concerning existence, and the uniqueness theorem of Bogaevsky~\cite{Bogaevskii2006}.

\section{Restatement of the model}\label{sec:restatement}

Previous considerations stress the importance of the function
\begin{equation}\label{g}
    g=\frac{\partial f_0}{\partial x}+f_1.
\end{equation}
Knowledge of this function is almost equivalent to the knowledge of both functions $f_0$ and $f_1$. Indeed, the function
${\partial f_0}/{\partial x}$ is odd and $f_1$ is even. Therefore, knowledge of these functions is equivalent to the
knowledge of the function $g$. On the other hand the knowledge of ${\partial f_0}/{\partial x}$ gives a complete
information on $f_0$ up to an additive constant. This constant is irrelevant if the goal of our damping process is to
stop oscillation, or to stop motion of the string at an unspecified point. The law of the controlled motion given by
Eqs.~(\ref{string_eq}), (\ref{control}), and (\ref{string_eq220}) can be restated as follows:
\begin{equation}\label{model}
    \left(\frac{\partial }{\partial t}-\frac{\partial}{\partial x}\right)g(x,t)=\delta(x)u(t),\quad |u|\leq1.
\end{equation}
This form of the governing law has its merits. In particular, it can be made rather explicit: One can rewrite
Eq.~(\ref{model}) as follows:
\begin{equation}\label{model2}
    \frac{d}{d t}\,g(x-t,t)=\delta(x-t)u(t),
\end{equation}
which means that
\begin{equation}\label{model3}
    g(x-t,t)=g(x,0)+\int_0^t\delta(x-s)u(s)ds=g(x,0)+\sum_{I} u(x+2k\pi),
\end{equation}
where the summation is over the set $I={I_t}$ of $k\in\Z$ such that $x+2k\pi\in[0,t]$. By the change of variables $z=x-t$
we arrive at:
\begin{equation}\label{model4}
    g(z,t)=g(z+t,0)+\sum_{J} u(z+t+2k\pi),
\end{equation}
where the summation is over the set $J={J_t}$ of $k\in\Z$ such that $z+2k\pi\in[-t,0]$.

Eq.~(\ref{model4}) should be understood as follows: Here $g$ is a bounded measurable function of $x,t$ and $u$ is a
bounded measurable function of $t$, the curve $t\mapsto g(\cdot,t)$ is continuous as a map from real line to
distributions depending on the space variable $x$. Eq.~(\ref{model4}) does not hold pointwise, but expresses an equality
in the space of curves of distributions wrt $x$.

\section{Existence of the motion under dry-friction control}\label{sec:existence}

We have to obtain an existence theorem for  initial value problem for the nonlinear wave equation~(\ref{string_eq220}).
By using transformation \myref{g} the task reduces to solution of the functional equation
\begin{equation}\label{newmodel5}
    g(z,t)=g(z+t,0)-\sum_{J} \sign g(0,z+t+2k\pi),
\end{equation}
which in turn can be reduced to the search for the function $g(0,t),\,t\geq0$, because this defines the control low
$u(t)=-\sign g(0,t)$.

This is quite nontrivial, because the the function $\phi(t)=g(0,t)$, we are looking for, should satisfy a functional
equation. The first step in establishing the desired functional equation is to make Eq.~(\ref{newmodel5}) hold pointwise.
It is explained in the previous section that this equation expresses an equality in the space of curves of distributions
of $x$. In order to make Eq.~(\ref{newmodel5}) hold pointwise we consider the one-sided averaging operators
\begin{equation}\label{average}
    {\rm Av}^{\pm\epsilon}: f(z,t)\mapsto \frac1\epsilon\int_0^{\pm\epsilon}f(z+x,t)dx,\quad {\rm Av}^{\pm}: f\mapsto\lim_{\epsilon\to0}{\rm Av}^{\pm\epsilon}(f),
\end{equation}
and more standard two-sided operator
\begin{equation}\label{average2}
    {\rm Av}^{\epsilon}: f(z,t)\mapsto \frac1{2\epsilon}\int_{-\epsilon}^{\epsilon}f(z+x,t)dx,\quad {\rm Av}: f\mapsto\lim_{\epsilon\to0}{\rm Av}^{\epsilon}(f).
\end{equation}
We note that, according to the Lebesgue differentiation theorem, the limit averaging operators ${\rm Av}^{\pm}$ and ${\rm
Av}$ are identities when applied to any $L_1$-function. The reason for application of these operators is that, if
operators ${\rm Av}^{\pm\epsilon}$ are applied to the right-hand and the left-hand sides of Eq.~(\ref{newmodel5}), the
obtained equation holds {\em pointwise}. In particular,
\begin{equation}\label{newmodel5average}
\begin{array}{l}
    {\rm Av}^{\epsilon}(g)(0,t)={\rm Av}^{\epsilon}(g)(t,0)-\\[1em]\sum\frac1{2\epsilon}\int_{-\epsilon}^{\epsilon}u(z+t+2k\pi)1_{[-t,0]}(z+t+2k\pi)dz= \\[1em]
    {\rm Av}^{\epsilon}(g)(t,0)-\frac12
    {\rm Av}^{-\epsilon}(u)(t)-\sum_{k\neq0}
    {\rm Av}^{\epsilon}(u)(t+2k\pi),
\end{array}
\end{equation}
where $u(t)=-\sign g(0,t)$, summation is over the set  of $k\in\Z$ such that $2k\pi\in[-t,0]$, and $\epsilon<t$.

To state the desired functional equation consider the function $\phi(t)={\rm Av}(g)(0,t)$. It follows from
Eq.~(\ref{newmodel5average}) by passing to the limit $\epsilon\to0$ that this function does exist in
$L_\infty(0,\infty)$. It also follows from Eq.~(\ref{newmodel5average}) that
\begin{equation}\label{model501}
    \phi(t)=G(t)- \frac12\sign \phi(t)-\sum_{k\neq0,2k\pi\in[-t,0]}\sign \phi (t+2k\pi),
\end{equation}
where $G(t)=g(t,0)$ is the given initial  $2\pi$-periodic function. Solution of this  equations  gives at the same time a
rigorously defined solution to the nonlinear wave equation~(\ref{string_eq220}).

Thus, we have to solve the equation
\begin{equation}\label{model501}
    \phi(t)+  \frac12\sign \phi(t)+\sum_{k\neq0,2k\pi\in[-t,0]}\sign \phi (t+2k\pi)=G(t),
\end{equation}
where $G$ is a given function, and $\phi$ is unknown. Note that the function $\phi$ need not be periodic. It should be
defined for nonnegative $t$. Note also that if $t<2\pi$, the latter equation reduces to a very simple one:
\begin{equation}\label{model502}
    \phi(t)+ \frac12\sign \phi(t)=G(t),
\end{equation}
which, obviously, has a unique solution, since the map $x\mapsto x+\sign x$ is a strictly monotone increasing
(multivalued) function. More explicitly, the solution $\phi(t)=G(t)-\frac12$ if $G(t)>\frac12$, and
$\phi(t)=G(t)+\frac12$ if $G(t)<-\frac12$. Otherwise, $\phi(t)=0$. Note that $|\phi(t)|\leq |G(t)|$  in the considered
interval $[0,2\pi)$ of values of the argument $t$.

It is better rewrite the above equation \myref{model502} in the form
\begin{equation}\label{model502+}
    \phi(t)+ \frac12 v(t)=G(t),\quad  v(t)=\sign \phi(t),
\end{equation}
where $\sign$-function is regarded as multivalued: $\sign(0)=[-1,1]$. Then, the a priori multivalued $\sign \phi(t)$ is
defined by (\ref{model502+}) uniquely. If $t<2\pi$ we obtain from Eq.~(\ref{model501}) and periodicity $G(t+2\pi)=G(t)$
that
\begin{equation}\label{phi3}
    \phi(t+2\pi)+ \frac12\sign \phi(t+2\pi)=G(t)-\sign \phi(t),
\end{equation}
which allows to extend by the preceding arguments the function $\phi(t)$ from $t\in[0,2\pi)$ to any positive value of
$t$. By the already used arguments we obtain that $|\phi(t)|\leq |G(t)|$ for all  $t\geq0$. We then have the following
precise statement.
\begin{theorem}\label{motion}
    The Cauchy problem for the nonlinear wave equation
    \begin{equation}\label{model_theorem} \left(\frac{\partial }{\partial t}-\frac{\partial}{\partial x}\right)g(x,t)=-\delta(x)\sign g(0,t),
\end{equation}
where $g(x,0)$ is a given bounded (Borel-measurable) function possesses a unique bounded solution for $t\geq0$. The
functions $\phi(t)=g(0,t)$ and $u(t)=-\sign g(0,t)$ form  a unique solution of functional equation~(\ref{model501}).
\end{theorem}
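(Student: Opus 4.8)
The plan is to prove the theorem in two stages. First I would show that, in the sense made precise in Section~\ref{sec:restatement}, bounded solutions of the nonlinear wave equation~(\ref{model_theorem}) are in bijective correspondence with bounded measurable solutions $\phi$ of the scalar functional equation~(\ref{model501}); this reduces both assertions of the theorem to a single statement about~(\ref{model501}). Second, I would solve~(\ref{model501}) by an explicit step-by-step construction on the intervals $[2\pi n,2\pi(n+1))$, using only the strict monotonicity of the multivalued map $\Psi\colon x\mapsto x+\frac12\sign x$.

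For the reduction, the key point is that a bounded function $g$ solves~(\ref{model_theorem}) (as a curve of distributions in $x$) precisely when it satisfies the integrated form~(\ref{newmodel5}) with $u=-\sign g(0,\cdot)$. Applying the symmetric averaging operator $\Av^\epsilon$ of~(\ref{average2}) to both sides and letting $\epsilon\to0$, and using the Lebesgue differentiation theorem to simplify the right-hand side, turns~(\ref{newmodel5}) into the pointwise relation~(\ref{newmodel5average}), hence into~(\ref{model501}) for $\phi(t)=\Av(g)(0,t)$; this is exactly where the coefficient $\frac12$ in front of $\sign\phi(t)$ comes from (the current jump of $g$ at $x=0$ contributes half of its height to the symmetric average). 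Conversely, given a bounded measurable solution $\phi$ of~(\ref{model501}), I would set $u=-\sign\phi$ — which is genuinely single valued, since, as noted after~(\ref{model502+}), writing the equation as $\phi(t)+\frac12 v(t)=(\text{known})$ with $v(t)\in\sign\phi(t)$ pins $v(t)$ down uniquely — and define $g$ by~(\ref{model4}); one then checks directly that this $g$ is bounded, carries the prescribed initial datum $g(\cdot,0)$, solves~(\ref{model_theorem}), and satisfies $g(0,t)=\phi(t)$ in the averaged sense. Thus existence and uniqueness for the Cauchy problem are equivalent to existence and uniqueness for~(\ref{model501}).

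For the functional equation, I would argue by induction on $n\ge0$, constructing $\phi$ on $[2\pi n,2\pi(n+1))$. On $[0,2\pi)$ the sum in~(\ref{model501}) is empty and the equation collapses to~(\ref{model502}), $\phi(t)+\frac12\sign\phi(t)=G(t)$; since $\Psi$ is strictly increasing its inverse is the single-valued $1$-Lipschitz map $\Psi^{-1}$ sending $y$ to $y-\frac12\sign y$ for $|y|>\frac12$ and to $0$ otherwise, so $\phi=\Psi^{-1}\circ G$ is a well-defined bounded Borel-measurable function with $|\phi(t)|\le|G(t)|$, and $\sign\phi(t)$ is determined. For the inductive step, every term $\sign\phi(t+2k\pi)$, $k\ne0$, occurring in~(\ref{model501}) for $t\in[2\pi n,2\pi(n+1))$ has argument in $[0,2\pi n)$ and is therefore already fixed; subtracting the instance of~(\ref{model501}) at $t-2\pi$ from the one at $t$ and using $2\pi$-periodicity of $G$ collapses the whole sum and yields the clean recursion $\phi(t)+\frac12\sign\phi(t)=\phi(t-2\pi)-\frac12\sign\phi(t-2\pi)$, of the same form as~(\ref{phi3}). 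The right-hand side is a determined bounded measurable function of $t$, so $\phi(t)=\Psi^{-1}$ of it is uniquely determined and bounded measurable on $[2\pi n,2\pi(n+1))$; moreover $|\Psi^{-1}(y)|\le\max(|y|-\frac12,0)$ together with $|\phi(t-2\pi)-\frac12\sign\phi(t-2\pi)|\le|\phi(t-2\pi)|+\frac12$ gives $|\phi(t)|\le|\phi(t-2\pi)|$, so by induction $|\phi(t)|\le|G(t)|\le\|G\|_\infty$ for all $t\ge0$. This produces a unique $\phi\in L_\infty(0,\infty)$ solving~(\ref{model501}), and hence, via the correspondence above, a unique bounded solution $g$ of~(\ref{model_theorem}) with $g(0,t)=\phi(t)$ and $u(t)=-\sign\phi(t)$.

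The main obstacle is the first stage rather than the second: one must fix unambiguously the sense in which~(\ref{model_theorem}) is solved and verify that the averaging construction of Section~\ref{sec:existence} sets up an exact equivalence between distributional solutions of the PDE and pointwise solutions of~(\ref{model501}) — in particular that the trace $g(0,t)$ entering the nonlinearity is the two-sided average $\Av(g)(0,t)$, since this is what yields the precise coefficients in~(\ref{model501}). Once that correspondence is established, the remainder is the elementary monotonicity argument above, with no fixed-point theorem or delicate continuity estimate needed beyond the $1$-Lipschitz property of $\Psi^{-1}$.
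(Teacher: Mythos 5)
Your proposal is correct and follows essentially the same route as the paper: reduce the PDE to the functional equation~(\ref{model501}) via the averaging operators of Section~\ref{sec:existence}, then solve~(\ref{model501}) interval by interval using the strict monotonicity of $x\mapsto x+\frac12\sign x$, with the bound $|\phi(t)|\leq|G(t)|$ propagated by induction. Your subtraction of consecutive instances of~(\ref{model501}) to obtain the two-term recursion $\phi(t)+\frac12\sign\phi(t)=\phi(t-2\pi)-\frac12\sign\phi(t-2\pi)$ is merely a more explicit form of the paper's Eq.~(\ref{phi3}), and your more careful statement of the equivalence between the distributional Cauchy problem and the pointwise functional equation is a welcome but not essentially different elaboration.
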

We call the flow $g=g(\cdot,0)\mapsto\Phi_t(g)=g(\cdot,t)$, where $t\geq0$, in the space of measurable bounded functions
the dry-friction flow.

\section{Asymptotic optimality: proof}\label{sec:optimality}

\subsection{Asymptotic optimality of control: polar-like coordinate system}
Here we present at an intuitive level reasons for asymptotic optimality of the control law~(\ref{sec:control}). The
rigorous treatment of asymptotic optimality is performed in the below. We define a polar-like coordinate system, well
suited for repre\-sen\-ta\-tion of the motion under the control $u$. Every state $0\neq \mathfrak{f}$ of the string can
be represented uniquely as
\begin{equation}\label{rhophi}
    \mathfrak{f}=\rho\phi,\mbox{ where }\rho=\rho(x)\mbox{ is a positive factor, and }\phi\in\partial\Omega.
\end{equation}
The pair $\rho,\phi$ is the coordinate representation for $x$, and $\rho(\phi)=1$ is the equation of the ``sphere''
$\omega=\partial\Omega$. It is important that the set $\omega$ is invariant under free (uncontrolled) motion of our
system~(\ref{string_eq2}). This follows from the similar invariance of the support function ${H}_{\Omega}(p)$ under
evolution governed by $\dot{p}=-{A^*}p$. The latter invariance is clear, because the support function is an ergodic mean
of the function $|\xi_1(0,t)|$ under the free motion. This implies invariance of  the  dual function
$\rho=\rho(\mathfrak{f}),$ so that $\left\langle{\partial{\rho}}/{\partial\mathfrak{f}},A\mathfrak{f}\right\rangle=0$.
Therefore, under the control $u$ from \myref{control} the total (Lie) derivative of $\rho$ takes the following form:
\begin{equation}\label{T}
    \dot \rho=\left\langle{\frac{\partial {\rho}}{\partial \mathfrak{f}},A\mathfrak{f}+Bu}\right\rangle=
    \left\langle{\frac{\partial {\rho}}{\partial \mathfrak{f}},Bu}\right\rangle=-\left|\left\langle{\frac{\partial{\rho}}{\partial
    \mathfrak{f}},B}\right\rangle\right|,
\end{equation}
where the last identity holds because ${\partial {\rho}}/{\partial \mathfrak{f}}$ is the outer normal to the set
$\rho\Omega$. In particular, the ``radius'' $\rho$ is a monotone non-increasing function of time. Moreover, the RHS of
Eq.~(\ref{T}) necessarily equals -1 if $f_1(0)\neq0$. For any other admissible control, we have
\begin{equation}\label{T3}
    \dot\rho\geq-\left|\left\langle{\frac{\partial {\rho}}{\partial\mathfrak{f}},B}\right\rangle\right|.
\end{equation}
The evolution of $\phi$ by virtue of system (\ref{string_eq2}) is described by
\begin{equation}\label{xi}
    \dot \phi=A\phi+\frac{1}{\rho}(Bu-\phi \dot \rho)
    =A\phi+\frac{1}{\rho}\left(Bu+\phi \left|\left\langle{\frac{\partial{\rho}}{\partial\mathfrak{f}},B}\right\rangle\right|\right).
\end{equation}
We note that the right-hand side
$-\left|\left\langle{\frac{\partial{\rho}}{\partial\mathfrak{f}}(\mathfrak{f}),B}\right\rangle\right|$ of Eq.~(\ref{T})
equals $-\left|\left\langle{\frac{\partial{\rho}}{\partial\mathfrak{f}}( \phi),B}\right\rangle\right|$. Thus, the
evolution of the RHS of Eq.~(\ref{T}) is determined by the evolution of $\phi$ by virtue of Eq.~(\ref{xi}). It is clear
that if $\rho$ is large, then the second term in the RHS of (\ref{xi}) is $O(1/\rho)$ and affects the motion of $\phi$
over the ``sphere'' $\omega$ only slightly. Our next task is to compute approximately the  ``ergodic mean''
\begin{equation}
    E_T=\frac{1}{T}\int_0^T \left|\left\langle{\frac{\partial{\rho}}{\partial\mathfrak{f}},B}\right\rangle\right|dt
\end{equation}
of the RHS of Eq.~(\ref{T}) provided that $\rho$ is large. Here $B$ is a constant vector, while, according to the
preceding arguments, the vector function
\begin{equation}
    \frac{\partial {\rho}}{\partial \mathfrak{f}}(t):=\frac{\partial {\rho}}{\partial \mathfrak{f}}(\mathfrak{f}(t))
\end{equation}
behaves approximately as $e^{A^*t}\frac{\partial {\rho}}{\partial \mathfrak{f}}(0)$. Therefore, the ergodic mean $E_T$ is
well approximated by
\begin{equation}
    E_T=\frac{1}{T}\int_0^T \left|\left\langle{e^{A^*t}\xi,B}\right\rangle\right|dt,
\end{equation}
where $\xi=\frac{\partial {\rho}}{\partial \mathfrak{f}}(0)$. We know from Theorem \ref{32} that as $T\to\infty$ the
ergodic mean $E_T$ tends to $H(\xi)=H(\frac{\partial {\rho}}{\partial \mathfrak{f}})$. However, according to one of the
basic ``duality relation'' (\ref{Legendre2}), we know that $H(\frac{\partial {\rho}}{\partial \mathfrak{f}})=1$.

Therefore, we conclude, by using abbreviation $\rho(t)=\rho(\frac{\partial {\rho}}{\partial
\mathfrak{f}}(\mathfrak{f}(t)))$, that
\begin{equation}\label{asymp_1}
    {(\rho(0)-\rho(T))}/{T}=1+o(1), \mbox{ as }T\to\infty ,
\end{equation}
provided that we use the dry-friction control~(\ref{control}).

Under any other admissible control, according to Theorem \ref{RhoEstimate},
\begin{equation}\label{asymp_2}
    {(\rho(0)-\rho(T))}/{T}\leq 1+o(1).
\end{equation}
These latter relations (\ref{asymp_1}) and (\ref{asymp_2}) express the asymptotic optimality we sought for.

\subsection{Formal proof}

Here we prove the asymptotic optimality of control~(\ref{control}) via the use of the function $g(x,t)$ from
Eq.~(\ref{g}). The law of motion~(\ref{model4}) is
\begin{equation}\label{model5}
    g(z,t)=g(z+t,0)-\sum_{J} \sign g(0,z+t+2k\pi),
\end{equation}
where the set $J={J_t}$ consists of $k\in\Z$ such that $z+2k\pi\in[-t,0]$. The functional $\rho$ has the form
$\rho(g)=2\pi\sup_{x\in \R/2\pi\Z}|g(x,t)|$. The control $\sign g(0,z+t)$ is not affected by the scaling transformation
\begin{equation}
    g\mapsto \Phi=g/\rho.
\end{equation}
However, if $\rho$ is large, then our previous considerations reveal that the function $\Phi=g/\rho$ moves in an almost
uncontrollable mode. The latter means that approximately
\begin{equation}
    \Phi(x,t)\thickapprox\Phi(x+t,0),
\end{equation}
so that we come to the approximate equality $$\sign g(0,z)\thickapprox \sign g(z,0).$$

More precisely, suppose that in the time-interval $[0,T]$ we have $\rho(g_t)\geq 2\pi M,$ where $M$ is a (large)
constant. In view of Eq.~(\ref{model501}) we have
\begin{equation}\label{model50}
    g(0,t)=g(t,0)-\frac12 \sign g(0,t) -\sum_{k\neq0,\,2k\pi\in[-t,0]}\sign g(0,t+2k\pi),
\end{equation}
and, therefore,
\begin{equation}\label{model51}
    |g(0,t)-g(t,0)|\leq \frac{t}{2\pi}.
\end{equation}
Since $\rho(g)\geq M$  there exist points $x\in\R/2\pi\Z$, where either $g(x,0)\geq M-1$ or $g(x,0)\leq -(M-1)$. Assume
for definiteness that $g(x,0)\geq M-1$. Then in view of Eq.~(\ref{model51}), $\sign g(0,t+2k\pi)=+1$ for $t\in[0,T]$
provided that $\frac{T}{2\pi}\leq M-1$. For instance, this is the case if $M$ is large and $T=O(\sqrt{M})$.

In view of \myref{model5} this means that
\begin{equation}\label{model6}
    g(z-t,t)= g(z,0)-\frac{t}{2\pi} \sign g(z,0)+O(1),
\end{equation}
where $|O(1)|\leq1$, and $g(z,0)\geq M-1$. This implies that
\begin{equation}\label{model6sup}
    \sup_{z}g(z,t)=\sup_{z} g(z,0)-\frac{t}{2\pi} +O(1),
\end{equation}
since $\sign g(z,0)=+1$ if $g(z,0)\geq M-1$. Since $\rho(t)=2\pi\sup_{z}g(z,t)$ we obtain the approximate equality
 \begin{equation}\label{approx_T3}
     {(\rho(0)-\rho(t))}/{t}=1+O(1/t),
 \end{equation}
provided that the length $T$ of the time interval is less than ${2\pi(M-1)}$.

By partition of any sufficiently long interval of time $[0,T]$ into many equal intervals of length $\leq{2\pi(M-1)}$ we
come to the following precise result.
\begin{theorem}\label{AsymptoticOptimality}
Consider evolution $\rho(t)=\rho(g_t)$ of $\rho$ under control \myref{model5}. Let
\begin{equation}
    M=\min\{\rho(0),\rho(T)\}.
\end{equation}
Suppose that $M\to+\infty$, $T\to+\infty$. Then, we have
\begin{equation}\label{approx_T}
    {(\rho(0)-\rho(T))}/{T}=1+O(1/{T}+1/{M}).
\end{equation}
Under any other admissible control,
\begin{equation}\label{approx_T2}
    {(\rho(0)-\rho(T))}/{T}\leq 1+O(1/{T}+1/{M}).
\end{equation}
\end{theorem}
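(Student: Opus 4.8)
The plan is to upgrade the heuristic "polar-coordinate" discussion of the previous subsection into a rigorous estimate by working directly with the explicit law of motion~\myref{model5} and the a priori bound~\myref{model51}, which are already rigorously established (the latter is an immediate consequence of Theorem~\ref{motion} and~\myref{model50}). The key point is that \myref{model51} is uniform: the function $t\mapsto g(0,t)$ differs from the given periodic initial profile $t\mapsto g(t,0)$ by at most $t/2\pi$ in sup-norm, \emph{regardless} of how large $\rho$ is. This is exactly what lets us freeze the sign of $g(0,\cdot)$ near the points realizing the supremum of the initial data, for as long as the relevant time window stays shorter than $2\pi(M-1)$.

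First I would do the short-interval case. Assume $\rho(g_s)\ge 2\pi M$ for all $s\in[0,T]$ and suppose $T\le 2\pi(M-1)$ (the case $T=O(\sqrt M)$ of the sketch, but the argument works for the full range up to $2\pi(M-1)$). Pick $z_0$ with $|g(z_0,0)|=\sup_z|g(z,0)|\ge M-1$; assume WLOG $g(z_0,0)\ge M-1$. By \myref{model51}, $g(0,s+2k\pi)\ge M-1-s/2\pi>0$ whenever $s\in[0,T]$ and $s+2k\pi$ lies in the relevant index range, so $\sign g(0,s+2k\pi)=+1$ throughout. Plugging this into \myref{model5} with $z=z_0$ gives $g(z_0-t,t)=g(z_0,0)-\tfrac{t}{2\pi}+O(1)$ with $|O(1)|\le 1$ (the $O(1)$ collecting the at-most-one boundary term from the index set $J$ together with the $\tfrac12\sign$ term). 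The same computation applied at every point $z$ with $g(z,0)\ge M-1$ — all of which have $\sign g(z,0)=+1$ — yields $\sup_z g(z,t)=\sup_z g(z,0)-\tfrac{t}{2\pi}+O(1)$, and since for large $M$ the supremum of $|g(\cdot,t)|$ is attained on the side where $g$ is positive, $\rho(g_t)=2\pi\sup_z g(z,t)$, whence $\bigl(\rho(0)-\rho(t)\bigr)/t=1+O(1/t)$ for $t\le T$. (If instead $g(z_0,0)\le-(M-1)$ one runs the mirror-image argument.)

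Next I would patch short intervals together. Given $T\to\infty$ and $M=\min\{\rho(0),\rho(T)\}\to\infty$, note that $\rho$ is monotone non-increasing along the dry-friction flow (this is \myref{T}, or directly: each step of~\myref{model501} can only shrink the sup-norm), so in fact $\rho(g_s)\ge\rho(T)\ge M$ for all $s\in[0,T]$, i.e. the hypothesis of the short-interval case holds on all of $[0,T]$. Partition $[0,T]$ into $N=\lceil T/(2\pi(M-1))\rceil$ consecutive subintervals $[t_{i},t_{i+1}]$ each of length $\le 2\pi(M-1)$; applying the short-interval estimate on each and summing the telescoping differences $\rho(t_i)-\rho(t_{i+1})$ gives $\rho(0)-\rho(T)=T+\sum_{i=0}^{N-1}O(1)=T+O(N)=T+O(T/M)$, so $\bigl(\rho(0)-\rho(T)\bigr)/T=1+O(1/M)$; the extra $O(1/T)$ in~\myref{approx_T} absorbs the contribution of the single (possibly shorter) final subinterval and the ceiling rounding. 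This proves~\myref{approx_T}. Finally, \myref{approx_T2} is not an asymptotic refinement of anything new: it is Theorem~\ref{RhoEstimate} (equivalently Corollary~\ref{RhoBound}) applied to the difference between the controlled and control-free trajectories, which gives $\rho(0)-\rho(T)\le T$ exactly for $T\ge2\pi$ under \emph{any} admissible control, hence a fortiori $\bigl(\rho(0)-\rho(T)\bigr)/T\le 1\le 1+O(1/T+1/M)$.

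The main obstacle is the bookkeeping in the short-interval step: one must check that the ``$O(1)$'' collecting the half-delta term $\tfrac12\sign\phi$ and the boundary terms of the summation set $J_t$ in~\myref{model5}–\myref{model501} is genuinely bounded by an absolute constant uniformly in $z$, $t$ and $M$, and that the transition ``$\sup|g(\cdot,t)|$ is attained where $g>0$'' is valid for all $t$ in the window once $M$ is large (it follows because the positive side only decreases by $t/2\pi\le M-1$ while the negative side is bounded by $|g(\cdot,0)|\le$ something comparable and also controlled by~\myref{model51}, so the gap $2(M-1)-\text{const}$ stays positive). Everything else — monotonicity of $\rho$, the telescoping sum, the invocation of Theorem~\ref{RhoEstimate} — is routine.
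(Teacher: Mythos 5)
Your proposal is correct and follows the paper's own proof essentially step for step: the short-interval estimate via the sign-freezing consequence of \myref{model51}, the partition of $[0,T]$ into subintervals of length at most $2\pi(M-1)$ with a telescoping sum, and the derivation of \myref{approx_T2} (and hence one half of \myref{approx_T}) from Theorem~\ref{RhoEstimate}; you even supply details the paper leaves implicit, namely the monotonicity of $\rho$ along the flow and the explicit bookkeeping of the $O(N)=O(T/M)$ error. One remark on the ``main obstacle'' you flag: the delicate point in passing from \myref{model6} to \myref{model6sup} is not really whether $\sup_z|g(z,t)|$ sits on the positive or negative side, but whether a point $z$ with \emph{moderate} initial value $g(z,0)$ could be driven up toward the (decreased) supremum by the accumulated control terms $\sum_J\sign g(0,z+2k\pi)$; your parenthetical does not address this, and neither does the paper's Section~\ref{sec:optimality} --- it is excluded by the per-residue-class recursion \myref{model502}--\myref{phi3}, which shows $|g(z-t,t)|\le\max\bigl(|g(z,0)|-t/2\pi,\,\tfrac12\bigr)+O(1)$ for \emph{every} $z$ and yields the needed upper bound on $\rho(t)$ (this is exactly the content of \myref{sup_g} in Section~\ref{sec:features}).
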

The preceding arguments of this Section prove statement~(\ref{approx_T}) and statement~(\ref{approx_T2}) following from
Theorem~\ref{RhoEstimate}.

\section{Features of the dry-friction flow}\label{sec:features}

Methods used in Section~\ref{sec:existence} allows revealing the basic properties of the dry-friction flow. In
particular, it is possible to derive the asymptotic optimality of the dry-friction flow directly from
Eqs.~(\ref{model502})--(\ref{phi3}).

\subsection{Far from the target}
Indeed, it follows from equations \myref{model502}, \myref{model502+} that if $\sup_x g(x,0)>1/2$, then
$\sup_{t\in[0,2\pi]} \phi(t)=\sup_x g(x,0)-1/2$. The same estimates hold with essential supremum ${\rm vraisup}$ instead
of the plain $\sup$. In particular,
\begin{equation}\label{sup}
    \Vert\phi_0\Vert=\Vert g\Vert-\frac12,
\end{equation}
where $\phi_0$ is the restriction of $\phi$ to the interval $[0,2\pi]$, $g(x)=g(x,0)$ is the initial data, and $\Vert
f\Vert$ stands for the $L_\infty$-norm of $f$ over the same interval  $[0,2\pi]$. From equation \myref{phi3} it follows
that
\begin{equation}\label{sup2}
    \Vert\phi_1\Vert=\Vert \phi_0\Vert-1,
\end{equation}
where $\phi_1$ is the restriction of $\phi(t+2\pi)$ to the interval $[0,2\pi]$. We denote by $F_{\tau}$, where
$\tau\geq0$, shift of the argument $(F_{\tau}\phi)(t):=\phi(t+\tau)$. It is clear from the definition of the dry-friction
flow $\Phi_\tau$ and Eq.~(\ref{sup}) that
\begin{equation}\label{sup_tau}
    \Vert F_{\tau}\phi\Vert=\Vert \Phi_\tau g\Vert-\frac12,
\end{equation}
provided that $\Vert \Phi_\tau g\Vert\geq\frac12$. Eq.~(\ref{sup2}) means that
\begin{equation}\label{sup2tau}
    \Vert F_{2\pi}\phi\Vert=\Vert \phi\Vert-1.
\end{equation}
These identities imply that for natural integers $k$
\begin{equation}\label{sup_g}
    \Vert  \Phi_{2k\pi} g\Vert=\Vert  g\Vert-k,
\end{equation}
provided that $\Vert  g\Vert\geq k+\frac12$. This can be restated in the notations of the preceding
Section~\ref{sec:optimality} as follows:
\begin{equation}\label{sup_asymp}
    \frac{\rho(0)-\rho(2k\pi)}{2k\pi}=1,
\end{equation}
and gives a very precise form of the asymptotic optimality (see Theorem~\ref{AsymptoticOptimality}).

\subsection{Near the target}

On the contrary, if $\Vert  g\Vert\leq\frac12$, the dry-friction flow does not help to damp the string. Under this
condition, we obtain from Eq.~(\ref{model502}) and Eq.~(\ref{model502+}) that
\begin{equation}\label{attractg}
    \phi_0=0,\quad \sign\phi_0=-2g,
\end{equation}
and from equation \myref{phi3} we obtain that
\begin{equation}\label{attractg2}
    \phi(t+2\pi)=0,\quad \sign\phi(t+2\pi)=2g(t)=2g(t,0)=-\sign\phi(t),
\end{equation}
provided that $t\in[0,2\pi)$. This means that the dry-friction flow is given by solution of the Cauchy problem for the
linear equation
\begin{equation}\label{attractg3}
    \left(\frac{\partial }{\partial t}-\frac{\partial}{\partial x}\right)g(x,t)=-2(-1)^k\delta(x)g(t,0),\mbox{ if }t\in[2k\pi,2(k+1)\pi).
\end{equation}
The norm $\Vert  \Phi_{2k\pi} g\Vert=\Vert  \phi(t+2k\pi)\Vert$ does not depend on the natural integer $k$.

It is easy to solve the Cauchy problem~(\ref{attractg3}) explicitly. The solution $g(x,t)$ is determined via the initial
data $G(x)=g(x,0)$ as $g(x,t)=(-1)^k G(x+t)$ provided that $x\in [0,2\pi)$ and $t\in[2k\pi,2(k+1)\pi)$.

\section{Conclusion}\label{sec:conclusion}

The subject of the present paper have arisen as a natural extension of our preceding study of finite systems of
oscillators~\cite{Ovseevich2013,Ovseevich2016}. There we put an emphasis on the case of non-resonant systems. Here we
study the string which is an infinite system of highly resonant oscillators. In both cases the basic new results are the
existence and uniqueness of the motion under the dry-friction control and the asymptotic optimality of the control.

The results of both studies are similar, but methods are rather different. The similarity of studies is especially far
reaching in first parts of both of them, where we investigate the reachable sets and limiting capabilities of admissible
controls. In some aspects the present case of a string is simpler than that of finitely many non-resonant oscillators.
E.g., we do not use any special function, like the hypergeometric function in the sense of Gelfand or DiPerna--Lions
theory~\cite{DiPernaLions1989}, which play a decisive part in control of finite system of oscillators (see
Refs.~\cite{Ovseevich2016}). On the other hand the infinite dimensional case is related to well known and quite real
analytic difficulties which are present (for details, see Appendix~III).

\bigskip
\noindent{\bf Acknowledgements}. This work was supported by the Russian Scientific Foundation, grant 16-11-10343.

{\setcounter{equation}{0} \setcounter{theorem}{0}\setcounter{lemma}{0} \setcounter{corollary}{0}
\def\theequation{A\arabic{equation}}\def\thetheorem{A\arabic{theorem}} \def\thelemma{A\arabic{lemma}}
\def\thecorollary{A\arabic{corollary}}
\newcounter{append}
\setcounter{append}{0}
\def\theappend{\Roman{append}}\refstepcounter{append}

\section*{APPENDIX \theappend. Singular arcs I}\refstepcounter{append}\label{sec:AppI}

The above analysis of the dry-friction control near the target extends to more general analysis of the motion along
singular arcs. These are by definition the time-intervals, where in the controlled motion
\begin{equation}\label{model_sing}
    \left(\frac{\partial }{\partial t}-\frac{\partial}{\partial x}\right)g(x,t)=\delta(x)u(t),\quad u=-\sign g(0,t)
\end{equation}
the control is not uniquely defined by the current state of the string, i.e. $g(0,t)\equiv0$.

In order to construct a motion of this kind we use the spectral decompositions $g(x,t)=\sum g_\mu(x)e^{i\mu t}$, and
$u(t)=\sum u_\mu e^{i\mu t}$. This almost periodic function should be bounded: $|u|\leq1$ Then, the functions $g_\mu$
should satisfy
\begin{equation}\label{gmu}
    i\mu g_\mu-\frac{\partial}{\partial x}\,g_\mu=-\delta u_\mu, \mbox{ and } g_\mu(0)=0.
\end{equation}
The first equation~(\ref{gmu}) guarantees that $0$ is the point of discontinuity of $g_\mu$, so that the second
equation~(\ref{gmu}) should be treated cautiously. In fact, the discussion of Section~\ref{sec:existence} shows that we
have to take $\frac12(g_\mu(0+)+g_\mu(0-))$ for $g_\mu(0)$. Indeed, according to the first  equation~(\ref{gmu}), the
function $g_\mu$ is piecewise differentiable with jumps at $x=0$. Therefore $2\pi$-periodic function $g_\mu$ should have
the form
\begin{equation}\label{gmu2}
    g_\mu(x)=C_\mu e^{i\mu x} \mbox{ for }x\in[0,2\pi),
\end{equation}
where the constant $C_\mu=(1-e^{2\pi i\mu})^{-1}u_\mu$. The condition $g_\mu(0)=0$ gives
\begin{equation}
    1+e^{2\pi i\mu}=0,
\end{equation}
which implies that $\mu$ should have the form $\mu=\frac12\nu$, where $\nu$ is an odd integer, and $C_\mu=u_\mu/2$.
Therefore, the control $u(t)=\sum u_\mu e^{i\mu t}$ is not just almost periodic but $4\pi$-periodic. Moreover, we have
\begin{equation}
    g(x,t)=\sum g_\mu(x)e^{i\mu t}=\frac12\sum u_\mu e^{i\mu (t+x)}=\frac12 u(t+x) \mbox{ for }x\in[0,2\pi).
\end{equation}

\section*{APPENDIX \theappend. Singular arcs II}\refstepcounter{append}\label{sec:AppII}

It is possible in a more general fashion analyze the singular arcs of the motion governed by the second-order nonlinear
wave equation~(\ref{string_eq220}). We will do this by the direct finite-dimensional approximation of the string by
finite number of harmonics. These arcs are the time-intervals of the motion, where the semi-flow
${\F}\to\phi_t({\F})={\F}_{t}$ leaves invariant the ``hyperplane'' $f_1(0)=0$, where the control $u=-\sign(f_1(0))$ is
not uniquely defined. We put the word hyperplane into quotation marks because the value $f_1(0)$ is badly defined within
the natural state space of the string. In order to be correct, we introduce a cut-off. Namely, we start with an
approximation of the string with first $N$ harmonics:
\begin{equation}\label{string_finite0}
\begin{array}{l}
    f_0(x)=\sum_{k=0}^N a_k\cos kx \mbox{ modulo constants }a_0,\\[.5em]
    f_1(x)=\sum_{k=0}^N b_k\cos kx,\\
\end{array}
\end{equation}
and consider in this $2N+1$-dimensional space the correctly defined ODE
\begin{equation}\label{string_finite}
\begin{array}{l}
    \dot a_0=b_0\\
    \dot b_0=u,\\
    \end{array}\quad\begin{array}{l}
    \dot a_k=b_k\\
    \dot b_k=-k^2a_k+2u,\quad k=1,\dots,N,\\
\end{array}
\end{equation}
which is the natural finite-dimensional approximation of the wave equation.

We require that $\sum_{k=0}^N b_k=0$ which is a restatement of the condition $f_1(0)=0$. This immediately implies that
$u=\frac1{2N+1}\sum_{k=0}^N k^2a_k$. Thus, we obtain a linear differential equation
\begin{equation}\label{string_finite2}
\begin{array}{l}
    \dot a_k=b_k\\
    \dot b_k=-k^2a_k+\frac1{2N+1}\sum_{k=1}^N k^2a_k,\quad k=1,\dots,N.\\
\end{array}
\end{equation}
in the space $\R^{2N}$ of sequences $a_k,b_k,\, k=1,\dots,N$. We are going to solve system~(\ref{string_finite2}) and
then remove the cut-off: In other words, we pass to the limit $N\to\infty$ in the obtained solution. To obtain the
solution we first find the ``spectral decomposition'' of the linear operator given by the RHS of
Eq.~(\ref{string_finite2}). The corresponding eigenvalue problem is:
\begin{equation}\label{string_finite3}
\begin{array}{l}
    \lambda a_l=b_l\\
    \lambda b_l=-l^2a_l+\frac2{2N+1}\sum_{k=1}^N k^2a_k,\quad l=1,\dots,N,\\
\end{array}
\end{equation}
which is equivalent to
\begin{equation}\label{string_finite4}
    (l^2+\lambda^2) a_l=\frac2{2N+1}\sum_{k=1}^N k^2a_k,\quad l=1,\dots,N.\\
\end{equation}
This, in turn, is equivalent to the system
\begin{equation}\label{string_finite40}
\begin{array}{l}
    a_k=\frac{R}{k^2+\lambda^2},\\[1em]
    \displaystyle\frac2{2N+1}\sum_{k=1}^N
    \frac{k^2}{k^2+\lambda^2}=1.
\end{array}
\end{equation}
Here $R=R_\lambda$ is an arbitrary $k$-independent constant. Thus, the eigenvalue problem reduces to the solution of
\begin{equation}\label{string_finite5}
    \frac2{2N+1}\sum_{k=1}^N \frac{k^2}{k^2+\lambda^2}=1.\\
\end{equation}
It easy to show that the problem (\ref{string_finite5}) has $2N$ purely imaginary eigenvalues $\lambda=i\mu$. Indeed,
this statement is equivalent to the fact that the polynomial equation of degree $N$
\begin{equation}\label{string_finite6}
    \frac2{2N+1}\sum_{k=1}^N \frac{k^2}{k^2-t}=1.\\
\end{equation}
has $N$ real roots. It is clear, at least when $N$ is large, that there is a positive root $t$ in a close vicinity of
zero, and that the function $t\mapsto\sum_{k=1}^N \frac{k^2}{k^2-t}$ tends to $+\infty$ as $t\to k^2-0$, and tends
$-\infty$ as $t\to k^2+0$ for $k=1,\dots,N$. This implies that each interval $[k,k+1]$ for $k=0,\dots,N-1$ contains a
root $t_k$.

Now we pass to the limit $N\to\infty$ in Eq.~(\ref{string_finite5}) and Eq.~(\ref{string_finite6}). To do this we rewrite
Eq.~(\ref{string_finite5}) in the following form:
\begin{equation}\label{string_finite7}
    \sum_{k=1}^N \frac{k^2-\mu^2}{k^2-\mu^2}+\mu^2\sum_{k=1}^N \frac{1}{k^2-\mu^2}=N+\frac12,\\
\end{equation}
which is equivalent to
\begin{equation}\label{string_finite8}
    \sum_{k=1}^N \frac{1}{k^2-\mu^2}=\frac1{2\mu^2}.\\
\end{equation}
The function $g_N(\mu)=\sum_{k=1}^N \frac{1}{k^2-\mu^2}$ has then a well-defined limit as $N\to\infty$. Namely,
\begin{equation}\label{string_finite9}
    \lim _{N\to\infty}g_N(\mu)=\sum_{k=1}^\infty \frac{1}{k^2-\mu^2}-\frac{1}{2\mu^2}=\frac{\pi}{2\mu}\ctg(\pi\mu)=g(\mu),
\end{equation}
and the limit eigenvalues $i\mu$ are given by roots $\mu_k=k+\frac12$ of $\ctg(\pi\mu)$. Here $\mu_k$ is the unique
solution of $\ctg(\pi\mu)$ in the interval $[k,k+1]$. The spectral decomposition is defined by the correspondence
\begin{equation}\label{string_finite10}
    a_k=\sum_{\mu}\frac{R_\mu}{k^2-\mu^2}
\end{equation}
between infinite sequences $a_k$ and $R_\mu$.

\medskip
\noindent{\bf Remark.} We have the following result:
\begin{theorem}\label{function} (cf. the ``Eisenstein''-part of Ref.~\cite{Weil1976})
    \begin{equation}\label{function0}
    \sum_{k=1}^\infty \frac{1}{k^2-\mu^2}\cos kx-\frac1{2\mu^2}=-\frac1{|\mu|}\sin \Vert \mu x\Vert_\mu,
    \mbox{ \rm where }
    \Vert y\Vert_\mu=\inf_{n\in\Z}|y+2\pi\!{\mu}{n}|
    \end{equation}
\end{theorem}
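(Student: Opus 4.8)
The plan is to recognize the left-hand side of \myref{function0} as a classical Fourier series in $x$ whose coefficients are $(k^2-\mu^2)^{-1}$ for $k\geq1$, minus the would-be $k=0$ term $\frac1{2\mu^2}$, and to identify the resulting $2\pi$-periodic function in closed form. First I would observe that for fixed $\mu\notin\Z$ the series $\sum_{k\geq1}(k^2-\mu^2)^{-1}\cos kx$ converges absolutely and uniformly, so it defines a continuous $2\pi$-periodic function $F_\mu(x)$; the strategy is to show $F_\mu(x)-\frac1{2\mu^2}=-\frac1{|\mu|}\sin\Vert\mu x\Vert_\mu$ by verifying that both sides have the same Fourier coefficients, or equivalently that the right-hand side, extended $2\pi$-periodically in $x$, has $k$th cosine coefficient $(k^2-\mu^2)^{-1}$ for $k\geq1$ and constant term $\frac1{2\mu^2}$.

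The cleaner route is to start from a known expansion. Recall the classical identity (valid for $x\in[-\pi,\pi]$, say, then extended periodically)
\begin{equation}
    \sum_{k=1}^\infty \frac{\cos kx}{k^2-\mu^2}-\frac1{2\mu^2}=-\frac{\pi}{2\mu}\,\frac{\cos\mu(\pi-|x|)}{\sin\pi\mu},
\end{equation}
which is the Fourier series of the function on the right; this is the standard Sturm--Liouville/Green's-function expansion for $-y''-\mu^2 y$ on the circle and can be cited or derived in a line by expanding $x\mapsto \cos\mu(\pi-|x|)$ in a cosine series. So the real content of the theorem is the trigonometric identity
\begin{equation}
    -\frac{\pi}{2\mu}\,\frac{\cos\mu(\pi-|x|)}{\sin\pi\mu}=-\frac1{|\mu|}\sin\Vert\mu x\Vert_\mu
\end{equation}
for $x$ in a fundamental domain, where $\Vert\mu x\Vert_\mu=\inf_{n}|\mu x+2\pi\mu n|=|\mu|\,\inf_n|x+2\pi n|$. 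Wait — that reading makes $\Vert\mu x\Vert_\mu=|\mu|\cdot\mathrm{dist}(x,2\pi\Z)$, which does not match; so I would instead read the notation literally as a single real number $y=\mu x$ reduced modulo $2\pi\mu\Z$, i.e. $\Vert\mu x\Vert_\mu$ is the distance from $\mu x$ to the lattice $2\pi\mu\Z$, and then the claimed right-hand side is genuinely $2\pi$-periodic in $x$ (period $2\pi$ in $x$ corresponds to shift $2\pi\mu$ in $y$). The key step is therefore to check that on $x\in[0,2\pi)$, where $\mu x\in[0,2\pi\mu)$, the piecewise-sinusoidal function $-\frac1{|\mu|}\sin\Vert\mu x\Vert_\mu$ equals the smooth expression above, which reduces to the elementary identity $\sin(\pi\mu-|\mu|\,|x-\pi|)=\dots$ after matching the sawtooth $\Vert\mu x\Vert_\mu$ on each subinterval; this is a routine case check using $\sin\Vert y\Vert_\mu=\pm\sin y$ with the sign determined by which lattice cell $y$ lies in.

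The main obstacle, and the step I would spend the most care on, is pinning down the precise meaning of the norm $\Vert\cdot\Vert_\mu$ and the resulting branch/sign bookkeeping: $\sin\Vert\mu x\Vert_\mu$ is a priori only Lipschitz and piecewise smooth in $x$, with potential corners at the points where $\mu x\in\pi\mu+2\pi\mu\Z$, and I must confirm that these corners are exactly cancelled so that the sum over $k$ — which is a $C^1$ function — really does reproduce it. Concretely I would (i) fix the convention $\Vert y\Vert_\mu=\min_{n\in\Z}|y-2\pi\mu n|\in[0,\pi|\mu|]$, (ii) verify the closed form on the single interval where $\Vert\mu x\Vert_\mu=|\mu x|$ by direct Fourier computation against $\cos kx$, integrating $\int_0^{2\pi}\sin(|\mu|\,|x|\ \mathrm{reduced})\cos kx\,dx$ by parts twice to produce the denominator $k^2-\mu^2$, and (iii) invoke periodicity and the evenness of both sides to conclude on all of $\R$. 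The convergence issues are mild (absolute convergence for $k^{-2}$-decay), so no delicate analysis is needed beyond this identification; alternatively, one can avoid Fourier computation entirely by noting both sides solve, in the distributional sense on the circle, the ODE $-v''-\mu^2 v=$ (a sum of point masses from the corners of the sawtooth) and agree on averages, but the Fourier-coefficient comparison is the most transparent and least error-prone route.
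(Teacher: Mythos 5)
Your overall strategy---sum the series to the classical closed form $-\frac{\pi}{2\mu}\cos(\mu(\pi-|x|))/\sin(\pi\mu)$ on a fundamental domain and then match it piecewise against $-\frac{1}{|\mu|}\sin\Vert\mu x\Vert_\mu$---is sound, and the ``alternative'' you mention in passing at the end (both sides solve $v''+\mu^2v=-\delta$ distributionally on the circle, and the periodic kernel of $\partial_x^2+\mu^2$ is trivial for $\mu\notin\Z$) is in fact the paper's entire proof. Also, the two readings of the norm that you agonize over coincide: $\inf_n|\mu x+2\pi\mu n|=|\mu|\inf_n|x+2\pi n|$, i.e.\ $\Vert\mu x\Vert_\mu=|\mu|\,\mathrm{dist}(x,2\pi\Z)$, so there is no ambiguity to resolve there.

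The gap is that the step you defer as a ``routine case check'' is exactly where the content lies, and executing it shows the identity as stated cannot be proved. On $x\in[0,\pi]$ (take $\mu>0$) the required equality reads $-\frac{\pi}{2\mu}\bigl(\cot(\pi\mu)\cos\mu x+\sin\mu x\bigr)=-\frac{1}{\mu}\sin\mu x$, which fails on two counts. First, the corner of $x\mapsto-\frac{1}{|\mu|}\sin\Vert\mu x\Vert_\mu$ at $x=\pi$ is \emph{not} cancelled in general: its derivative jumps there by $2\cos(\pi\mu)$, so equality with the $C^1$ left-hand side forces $\cos(\pi\mu)=0$, i.e.\ $\mu\in\Z+\frac12$---the only case used in Appendix~II, but not stated in the theorem. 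Second, even then the two sides differ by the constant factor $\pi/2$: for $\mu=\frac12$, $x=\pi$ the left side equals $-\pi$ (via $1-\frac13+\frac15-\cdots=\frac\pi4$) while the right side equals $-2$. The correct right-hand side is $-\frac{\pi}{2|\mu|}\sin\Vert\mu x\Vert_\mu$ with $\mu\in\Z+\frac12$; the same normalization is missing from the paper's one-line proof, since $\partial_x^2+\mu^2$ applied to the left side gives $-\pi\delta(x)$ (because $\frac12+\sum_{k\geq1}\cos kx=\pi\delta$ on the circle) whereas applied to the right side it gives $-2\delta(x)+2\cos(\pi\mu)\,\delta(x-\pi)$. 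So keep your plan, but actually carry out the matching: it proves the corrected identity and refutes the one as written.
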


\begin{proof}
Indeed, operator $L=\frac{\partial^2}{\partial x^2}+\mu^2$, when applied to LHS and RHS of (\ref{function0}) gives
$-\delta(x)$, and the kernel of $L$ in the space of $2\pi$-periodic functions is 0.
\end{proof}

This fact allows to rewrite the transform  \myref{string_finite10} in its functional form as follows:
\begin{equation}\label{string_finite100}
\begin{array}{l}
f_0(x)=\sum a_k\cos kx=-\sum_{\mu}\frac{\Re{R_\mu}}{|\mu|}\sin \Vert \mu x\Vert_\mu\\[1em]
f_1(x)=\sum b_k\cos kx=-\sum_{\mu}(\sign\mu)\Im{R_\mu}\sin \Vert \mu x\Vert_\mu
\end{array}
\end{equation}

The  sequence of complex numbers $R_\mu$ is self-adjoint, meaning that $R_{-\mu}=\overline{R}_{\mu}$. Thus, the solution
of Eq.~(\ref{string_eq22}) along a singular arc has the form $f(x,t)=\sum a_k(t)\cos kx$, where
\begin{equation}\label{string_finite1000}
    a_k(t)=\sum_{\mu}\frac{R_\mu e^{i\mu t}}{k^2-\mu^2}=2\sum_{\mu>0}\frac{\Re (R_\mu e^{i\mu t})}{k^2-\mu^2},
\end{equation}
and $\mu$ runs over roots $\mu_k=k+\frac12$ of the function $\tg(\pi\mu)$.

The point with coordinates $a_k, b_k$ belongs to the singular arc at the cut-off level $N$ if the control
$u=\frac1{N+1}\sum_{k=0}^N k^2a_k$ satisfies the bound $|u|\leq1$. The problem is to pass to the limit $N\to\infty$ in
this condition. In terms of the variables $R_\mu$ the condition tells that
\begin{equation}\label{string_finite12}
    \left|\frac1{N+1}\sum_{k\in[1,N],\mu\in (0,N)} \frac{k^2 \Re (R_{\mu})}{k^2-\mu^2}\right|\leq\frac12.
\end{equation}
The  sum
\begin{equation}
    \frac1{N+1}\sum_{k=1}^{k=N}\frac{k^2}{k^2-\mu^2}=\frac1{N+1}(N+\mu^2 g_N(\mu))=\frac1{N+1}(N+1+\mu^2 g_N(\mu)-1)
\end{equation}
equals to $1$ because of equation (\ref{string_finite8}): $\mu^2 g_N(\mu)=1$. Therefore, the condition
(\ref{string_finite12}) at the cut-off level $N$ can be restated as follows:
\begin{equation}\label{string_finite14}
    \left|\sum_{\mu\in (0,N)}  \Re (R_{\mu})\right|\leq\frac12.
\end{equation}
The formal passage to the limit $N\to\infty$ transforms  (\ref{string_finite14}) into
\begin{equation}\label{string_finite13}
    \left|\sum_{\mu>0}  \Re (R_{\mu})\right|\leq\frac12.
\end{equation}
We note that the summation in Eq.~(\ref{string_finite14}) and Eq.~(\ref{string_finite13}) goes over different sets of
roots $\mu$: in the first case over roots of $\mu^2 g_N(\mu)=1$, and in the second case over roots of $\mu^2 g(\mu)=1$.

The time-limits of the arc are determined by the inequality
\begin{equation}\label{string_finite11}
    \left|\sum_{\mu>0} \Re (R_{\mu} e^{i\mu t})\right|\leq\frac12,
\end{equation}
which says that the control $u$ satisfies $|u|\leq1$. Eq.~(\ref{string_finite12}) and Eq.~(\ref{string_finite1000})
together determine at a formal level the singular motion of the system. In order to make these considerations applicable
to the ``real'' string, we need to know the continuity properties of the function
\begin{equation}
    t\mapsto\sum_{\mu>0} \Re (R_{\mu}e^{i\mu t}),
\end{equation}
when the function $\frac{\partial f_0}{\partial x}+f_1$ is bounded, and related to the sequence $R_{\mu}$ via
Eq.~(\ref{string_finite1000}).

\section*{APPENDIX \theappend. Contracting properties of the dry-friction control}\refstepcounter{append}\label{sec:AppIII}

We consider the semi-flow ${\F}\to\phi_t({\F})={\F}_{t}$, defined by Cauchy problem for the nonlinear string
equation~(\ref{string_eq2}) of second order. Here we will show that at formal level the semi-flow
${\F}\to\phi_t({\F})={\F}_{t}$ is continuous, and even contracting wrt the ``spacial'' argument ${\F}$. Indeed, consider
the functional of energy
\begin{equation}
    E({\F})=\frac12 \left\Vert f_1\right\Vert^2+\frac12\left\Vert \frac{\partial f_0}{\partial x}\right\Vert^2,
\end{equation}
where $\Vert  \cdot\Vert$ is the $L_2(0,2\pi)$-norm. We have the following basic a priori estimate
\begin{equation}\label{string_estimate}
    \frac{d}{dt} E(\phi_t({\F})-\phi_t({\mathfrak g}))\leq0.
\end{equation}
Indeed, by formal computation we have for $E(t)= E(\phi_t({\F})-\phi_t({\mathfrak g}))$ that
\begin{equation}\label{string_estimate2}
    \begin{array}{l}
    \frac{d}{dt} E=A+B+C, \mbox{ where }\\[1em]
    A=\left\langle \frac{\partial }{\partial x}\left(f_{t0}-g_{t0}\right),\frac{\partial}{\partial x}\left(f_{t1}-g_{t1}\right)\right\rangle,  \\[1em]
    B=\left\langle f_{t1}-g_{t1}, \Delta\left(f_{t0}-g_{t0}\right)\right\rangle,\\[1em]
    C=-\left(f_{t1}(0)-g_{t1}(0)\right)\left(\sign f_{t1}(0)-\sign g_{t1}(0)\right).\\
\end{array}
\end{equation}
We have $A+B=0$, because 
\begin{equation}
	\langle v, \Delta u\rangle=-\langle \frac{\partial v}{\partial x}, \frac{\partial u}{\partial x}\rangle
\end{equation}
for any pair $u,v$ of periodic functions. Therefore,
\begin{equation}
    \dot E=A+B+C=C.
\end{equation}
Denote $x=f_{t1}(0)$, $y=g_{t1}(0)$. 
We then have 
\begin{equation}
	C=-(x-y)(\sign x-\sign y)\leq0, 
\end{equation}
because the sign-function is monotone.
This implies inequality~(\ref{string_estimate}) which, in turn, implies that the map ${\F}\mapsto\phi_t({\F})$ is
contracting wrt the energy-norm for $t\geq0$. These considerations are formal, because are based on the formal
differentiation of a product of distributions.

This becomes even more clear if we rewrite the above formal computation in the simpler case of the nonlinear string
equation~(\ref{model}) of the first order. In this case the energy is as follows:
\begin{equation}
    E(g)=\frac12 \left\Vert g\right\Vert^2=\frac12\int  g(x)^2\,dx,
\end{equation}
where integration is over the torus $\R/2\pi\Z$. Formally, if 
\begin{equation}
	E(t):=E(\Phi_t(g)-\Phi_t({f})), 
\end{equation}
then
\begin{equation}\label{string_estimate20}
    \begin{array}{l}
    \frac{d}{dt} E=A+B, \mbox{ where }\\[1em]
    A=\left\langle \frac{\partial }{\partial x}\left(f-g\right),\left(f-g\right)\right\rangle,  \\[1em]
    B=-\left(f(0)-g(0)\right)\left(\sign f(0)-\sign g(0)\right).\\
\end{array}
\end{equation}
The term $A=0$ for any pair of periodic functions $f,\,g$, and the term $B$ is nonpositive since  the sign-function is
monotone. Thus, the dry-friction semi-flow $\Phi_t$ is contracting wrt the energy-norm.

It is not clear to us whether the dry-friction flow rigorously constructed in Section~\ref{sec:existence} is contracting
indeed wrt the energy norm.

\end{document}